\documentclass[a4paper,11pt]{amsart}

\usepackage[T1]{fontenc}
\usepackage{amssymb}
\usepackage{mathrsfs}
\usepackage{tikz-cd}
\usepackage[pdfusetitle,linktocpage]{hyperref}
\hypersetup{hidelinks}

\newtheorem{thm}{Theorem}[section]
\newtheorem{prop}[thm]{Proposition}
\newtheorem{lem}[thm]{Lemma}

\newtheorem*{mainthm}{Theorem A}

\theoremstyle{remark}
\newtheorem{rem}[thm]{Remark}

\newcommand{\cG}{\mathcal G}
\newcommand{\cH}{\mathcal H}
\newcommand{\cL}{\mathcal L}
\newcommand{\cB}{\mathscr B}
\newcommand{\BX}{B_X}
\newcommand{\BY}{B_Y}
\newcommand{\BXc}{B_{X,\mathrm c}}
\newcommand{\BYc}{B_{Y,\mathrm c}}
\newcommand{\cBX}{\mathscr B_X}
\newcommand{\cBY}{\mathscr B_Y}
\newcommand{\Ind}{\operatorname{Ind}}
\newcommand{\supp}{\operatorname{supp}}
\newcommand{\op}{\mathrm{op}}
\newcommand{\red}{\mathrm{red}}

\begin{document}
\title{An \texorpdfstring{$L^p$}{Lp} Version of Raeburn's Symmetric Imprimitivity Theorem}
\author{Yeong Chyuan Chung}
\address{School of Mathematics, Jilin University, Changchun 130012, Jilin, P. R. China}
\email{chungyc@jlu.edu.cn}
\date{\today}

\begin{abstract}
Let $1\leq p<\infty$. We prove an $L^p$-operator analogue of Raeburn's symmetric imprimitivity theorem for countable discrete groups. Let $A$ be an $L^p$-operator algebra with a contractive two-sided approximate identity, admitting a nondegenerate isometric representation on a $\sigma$-finite $L^p$-space. Given commuting free and proper actions of groups $G$ and $H$ on a second-countable locally compact Hausdorff space $Z$, together with commuting isometric actions on $A$, we show that the two corresponding full crossed products, and likewise the two reduced crossed products, are Morita equivalent as Banach algebras. The proof is organized through the linking groupoid. Its main analytic ingredient is a coefficient-valued spatial matrix dilation; in the reduced case we prove that this dilation preserves regularity after a Borel reindexing of the opposite corner. A pairing-valued approximate identity yields fullness and nondegeneracy.
\end{abstract}

\maketitle


\section{Introduction}

Raeburn's symmetric imprimitivity theorem \cite{Raeburn88} extends the Green--Rieffel symmetric imprimitivity theorem to coefficients. For commuting free and proper group actions on a locally compact space $Z$, together with commuting actions on a $C^*$-algebra $A$, it gives a strong Morita equivalence between crossed products of the two induced $C^*$-algebras. We prove full and reduced $L^p$-operator analogues for countable discrete groups.

The theory of $L^p$-operator crossed products has developed substantially since Phillips introduced full and reduced crossed products by isometric actions \cite{PhillipsCrossed}; see also the survey \cite{GardellaSurvey}. Subsequent work has treated simplicity and traces \cite{HejazianPooya}, Takai duality \cite{WangZhuTakai}, rigidity for crossed products arising from topologically free actions \cite{ChoiGardellaThiel}, and $p$-nuclearity \cite{WangZhuNuclearity}. Delf\'in, Farsi, and Packer developed twisted Banach algebra crossed products, including full and reduced $L^p$-operator versions and an $L^p$ Packer--Raeburn stabilization theorem \cite{DelfinFarsiPacker}. In a direction especially close to the present work, Wang and Zhu proved an $L^p$-version of Green's imprimitivity theorem for the action of $\mathbb Z$ on $\mathbb Z/n\mathbb Z$, with a concrete spatial matrix-algebra description of the crossed product \cite{WangZhuImprimitivity}. These developments point toward an $L^p$-analogue of the structural theory of $C^*$-crossed products, while also exhibiting phenomena absent from the Hilbert-space theory.

The scalar case $A=\mathbb C$ follows from Morita equivalence of full and reduced $L^p$-operator algebras of equivalent \`etale groupoids; see \cite{CDW}. The passage to a noncommutative coefficient algebra is not formal. For $p\neq2$, the Hilbert-module argument underlying the $C^*$-theory is no longer available, and the full $L^p$ norm need not be an unconditional completion norm. The main point is therefore to compare the intrinsic crossed product norms with norms obtained from a coefficient linking construction.

The bundle and linking-groupoid organization is modeled on groupoid crossed product equivalence arguments such as \cite{MuhlyWilliams,SimsWilliams}, where coefficient algebras are treated as section algebras of bundles and equivalence is encoded by a linking object. Paravicini's Banach-groupoid framework \cite{ParaviciniInduction} provides a Banach-algebraic analogue of this viewpoint. Our analytic argument is different: the $L^p$ setting lacks the Hilbert-module positivity available at $p=2$, and we replace it by a coefficient spatial matrix dilation.

Starting from a covariant $L^p$-representation of either diagonal crossed product, we construct a representation of the compactly supported coefficient algebra of the linking groupoid. The main analytic ingredient is a coefficient version of the spatial matrix dilation in \cite[Section~5]{CDW}. The coordinate decomposition is generally only Borel: it determines contractive band projections (spectral projections when $p=2$), whereas coefficient sections are transported along continuous open bisections. Exact recovery of the original representation on the distinguished corner gives one inequality between norms, while compression to the opposite diagonal corner recovers a covariant representation and gives the reverse inequality. This separation of the Borel and continuous operations is essential to the argument.

For the reduced crossed products, a new point enters: a dilation that begins with a regular representation of one diagonal corner must remain regular on the opposite corner. We prove this by expanding the regular representation into its $G$-coordinates and reindexing the resulting Borel bands using the fibers of $Z\to Z/H$, on each of which $H$ acts freely and transitively. Coefficients are again transported only along continuous open slices.

For fullness, we construct a compactly supported approximate identity for each induced algebra whose members are finite sums of products supported in local trivializations. These products can be realized as off-diagonal convolutions, giving a pairing-valued approximate identity as in the scalar argument of \cite{CDW}. The same approximate identity works for both the full and reduced completions. The resulting pairing-valued approximate identities also establish nondegeneracy of the off-diagonal Banach modules.

The main result is the following.

\begin{mainthm} \label{mainthm}
Let $1\leq p<\infty$. Let $G$ and $H$ be countable discrete groups acting freely and properly on a second-countable locally compact Hausdorff space $Z$, on the left and right, respectively, with commuting actions. Let $A$ be an $L^p$-operator algebra with a contractive two-sided approximate identity, admitting a nondegenerate isometric representation on a $\sigma$-finite $L^p$-space. Suppose that $G$ and $H$ act on $A$ by commuting isometric actions $\alpha$ and $\beta$. Then, with the induced algebras and induced actions defined below,
\[
 F^p\bigl(G,\Ind_H^Z(A,\beta),\sigma\bigr)
 \sim_M
 F^p\bigl(H,\Ind_G^Z(A,\alpha),\tau\bigr),
\]
and
\[
 F^p_{\red}\bigl(G,\Ind_H^Z(A,\beta),\sigma\bigr)
 \sim_M
 F^p_{\red}\bigl(H,\Ind_G^Z(A,\alpha),\tau\bigr).
\]
\end{mainthm}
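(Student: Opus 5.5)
The plan is to encode both crossed products as the two diagonal corners of a single Banach algebra built from the linking groupoid, and then show that the completion norm on that algebra restricts to the intrinsic crossed product norm on each corner.

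\emph{Step 1: linking algebra.} Let $\cL$ be the linking groupoid of the $(G\ltimes Z)$--$(Z\rtimes H)$ equivalence. Its unit space is $Z/H\sqcup G\backslash Z$, and its off-diagonal pieces are copies of $Z$. Let $\BXc$ and $\BYc$ be the compactly supported sections of the bundles whose fibres over the two unit-space components are the appropriate copies of $A$, twisted by $\alpha$ and $\beta$ respectively. We form the compactly supported coefficient algebra $\cL_c$, with convolution and involution-free $L^p$ structure. It has four corners. The corner $p_X\cL_c p_X$ is identified with $C_c(G,\Ind_H^Z(A,\beta))$, the corner $p_Y\cL_c p_Y$ with $C_c(H,\Ind_G^Z(A,\alpha))$, and the off-diagonal corners give bimodules $\cBX$ and $\cBY$ together with pairings. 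These identifications are algebraic and will be checked on slices.

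\emph{Step 2: norms.} Define a full norm on $\cL_c$ as the supremum over all contractive representations on $\sigma$-finite $L^p$-spaces that are obtained by the dilation described next. The reduced norm is defined in the same way, using only the dilations of regular representations. The key claim is that compressing to $p_X$ recovers exactly the full (respectively reduced) crossed product norm, and likewise for $p_Y$.
\begin{itemize}
\item \emph{First inequality (corner norm $\le$ crossed product norm).} Starting from a nondegenerate covariant pair $(\pi,u)$ for the $G$-side, we build a representation of $\cL_c$ on $L^p$ of a larger measure space. The larger space has one coordinate for each point of $Z/H$ and one for each point of $G\backslash Z$, with Borel choices of sections. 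Coefficient sections act by transport along continuous open bisections, and the Borel coordinate decomposition enters only through contractive band projections. On the $p_X$ corner this representation reproduces $(\pi,u)$ exactly.
\item \emph{Reverse inequality.} Compressing the same dilation to $p_Y$ yields a covariant representation of the $H$-side. By symmetry, it also yields the reverse inequality.
\end{itemize}

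\emph{Step 3: reduced case.} This is the main obstacle. We must show that dilating a regular representation of the $G$-side gives, on the $p_Y$ corner, a representation dominated by a regular one for the $H$-side (and conversely). The first thing to try is to write the regular representation as $\bigoplus_{g\in G}$ of shifted copies of $\pi\circ\sigma_g$. Then we use a Borel cross-section of $Z\to Z/H$ to identify each fibre with $H$, which is possible since $H$ acts freely and transitively on fibres. This reindexes the $G$-coordinate bands as $H$-coordinate bands, so that the compressed representation becomes unitarily (isometrically) equivalent to $\Ind$ of a representation of $\Ind_G^Z(A,\alpha)$. Countability of $G$ and $H$ and second countability of $Z$ make these Borel choices measurable and keep the measure space $\sigma$-finite.

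\emph{Step 4: Morita context.} Complete the off-diagonal corners in the linking norm to obtain bimodules, and check that the pairings are contractive. For fullness and nondegeneracy, construct an approximate identity of each induced algebra. Take a partition of unity subordinate to finitely many trivializing open sets $U$ on which the relevant quotient map is injective, multiplied by an approximate identity of $A$. Each term is then a product $\langle \xi,\eta\rangle$ of off-diagonal sections supported over $U$, which gives a pairing-valued bounded approximate identity in both the full and reduced completions. Combining this with Step 2 gives Morita equivalence of the full crossed products and, by Step 3, of the reduced crossed products.
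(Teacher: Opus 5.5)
Your outline is correct and follows essentially the same route as the paper: a linking-groupoid coefficient algebra; a coefficient spatial matrix dilation (Borel band projections for the coordinates, continuous transport along open bisections for the coefficients) that gives exact recovery on one diagonal corner and a covariant representation on the other by compression; Borel reindexing of the $G$-coordinate bands through a section of $Z\to Z/H$, showing that dilations of regular representations stay regular on the opposite corner; and a pairing-valued approximate identity built from slice-supported products for fullness and nondegeneracy. A few small slips should be fixed: exact recovery gives linking norm $\geq$ crossed product norm and compression gives $\leq$ (your labels in Step~2 are reversed); the groupoids equivalent via $Z$ are $G\ltimes(Z/H)$ and $H\ltimes(G\backslash Z)$, not $G\ltimes Z$ and $Z\rtimes H$; and the slices in Step~4 must be injective for both $q_H$ and $q_G$, so that they are bisections of $\cL$ and each pairing lands in the $e_G$-component.
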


The contractive approximate identity hypothesis is used in the multiplier-extension, local factorization, and fullness arguments below. We do not claim that it is optimal for the statement of the theorem.

\subsection{Conventions and preliminaries}
Throughout, all Banach algebras and Banach spaces are complex. An $L^p$-operator algebra means a Banach algebra admitting an isometric representation as a closed subalgebra of $B(L^p(\mu))$ for some measure space. All $L^p$-spaces used to define the crossed product norms below are assumed $\sigma$-finite.

We use the Banach-correspondence terminology of \cite[Section~2.3]{CDW}, which is equivalent here to Paravicini's notion of Morita equivalence \cite{ParaviciniMorita}. If $M$ and $N$ are linear subspaces of compatible Banach algebras or modules, then $MN$ denotes the linear span of the products $mn$; its norm closure is written $\overline{MN}$. Products of arbitrary subsets of a groupoid always retain their usual setwise meaning. A Banach algebra $B$ is \emph{nondegenerate} if $\overline{BB}=B$.

A \emph{Banach $B$-pair} is a pair $(X,Y)$ consisting of a left Banach $B$-module $X$, a right Banach $B$-module $Y$, and a bounded bilinear pairing
\[
 \langle\cdot,\cdot\rangle_B:X\times Y\longrightarrow B
\]
satisfying
\[
 \langle bx,y\rangle_B=b\langle x,y\rangle_B,
 \qquad
 \langle x,yb\rangle_B=\langle x,y\rangle_Bb,
 \qquad
 \|\langle x,y\rangle_B\|\leq\|x\|\,\|y\|.
\]
Here $\langle X,Y\rangle_B$ denotes the linear span of all pairing values. It is \emph{full} if $\overline{\langle X,Y\rangle_B}=B$ and \emph{nondegenerate} if $\overline{BX}=X$ and $\overline{YB}=Y$. A \emph{Banach $(A,B)$-correspondence} is a Banach $B$-pair $(X,Y)$ such that $X$ is a Banach $B$--$A$ bimodule, $Y$ is a Banach $A$--$B$ bimodule, and
\[
 \langle xa,y\rangle_B=\langle x,ay\rangle_B
 \qquad(x\in X,\ y\in Y,\ a\in A).
\]
For nondegenerate Banach algebras $A$ and $B$, a \emph{Morita equivalence} consists of bimodules ${}_BX_A$ and ${}_AY_B$ such that $(X,Y)$ is a full nondegenerate Banach $(A,B)$-correspondence, $(Y,X)$ is a full nondegenerate Banach $(B,A)$-correspondence for a pairing ${}_A\!\langle\cdot,\cdot\rangle$, and
\[
 \langle x,y\rangle_Bx'=x\,{}_A\!\langle y,x'\rangle,
 \qquad
 y\langle x,y'\rangle_B={}_A\!\langle y,x\rangle y'.
\]

For later reference we also fix the crossed product conventions. Let $G$ be a countable discrete group acting isometrically on an $L^p$-operator algebra $B$ by $\alpha$. We assume here, as in Theorem~A, that $B$ admits a nondegenerate isometric representation on a $\sigma$-finite $L^p$-space. On the finitely supported functions $C_c(G,B)$ we use
\begin{equation}\label{eq:crossed-conv}
 (f_1*f_2)(g)=\sum_{h\in G}f_1(h)\,\alpha_h\bigl(f_2(h^{-1}g)\bigr).
\end{equation}
A nondegenerate contractive covariant representation on an $L^p$-space $E$ is a pair $(\pi,U)$, where $\pi:B\to B(E)$ is nondegenerate and contractive and $U$ is a group representation of $G$ by surjective linear isometries of $E$, satisfying
\[
 U_g\pi(b)U_g^{-1}=\pi(\alpha_g(b)).
\]
Its integrated form is
\[
 (\pi\rtimes U)(f)=\sum_{g\in G}\pi(f(g))U_g.
\]
The full crossed product norm is
\[
 \|f\|_{F^p(G,B,\alpha)}
 :=\sup_{(\pi,U)}\|(\pi\rtimes U)(f)\|,
\]
where the supremum runs over the preceding covariant representations. Because $B$ admits a nondegenerate isometric representation, the associated regular representation is faithful on $C_c(G,B)$; hence the displayed supremum is a norm. The completion is denoted $F^p(G,B,\alpha)$; this agrees with the standard convention in \cite{PhillipsCrossed}. If $\pi_0:B\to B(E_0)$ is nondegenerate and contractive, its regular covariant representation on $\ell^p(G,E_0)$ is
\begin{equation}\label{eq:regular-conventions}
 \begin{aligned}
 (\pi_0^{\mathrm{reg}}(b)\xi)(s)
 &=\pi_0(\alpha_{s^{-1}}(b))\xi(s),\\
 (\lambda_G(t)\xi)(s)&=\xi(t^{-1}s).
 \end{aligned}
\end{equation}
Here $b\in B$, $\xi\in\ell^p(G,E_0)$, and $s,t\in G$.
The reduced norm is
\[
 \|f\|_{F^p_{\red}(G,B,\alpha)}
 =\sup_{\pi_0}\|(\pi_0^{\mathrm{reg}}\rtimes\lambda_G)(f)\|,
\]
where the supremum runs over all nondegenerate contractive representations $\pi_0$ on $\sigma$-finite $L^p$-spaces. The same separating regular representation shows that this supremum is also a norm. The completion of $C_c(G,B)$ in this norm is denoted $F^p_{\red}(G,B,\alpha)$. The same reduced algebra is often denoted $F^p_\lambda(G,B,\alpha)$.
Finally,
\[
 \|f\|_1:=\sum_{g\in G}\|f(g)\|
\]
defines the usual $\ell^1$ crossed product norm. We denote its completion by
\[
 \ell^1(G,B,\alpha).
\]
We write $b\delta_g$ for the finitely supported function taking the value $b$ at $g$ and zero elsewhere. Both the full and reduced norms are bounded above by $\|\cdot\|_1$.

For a Banach algebra $B$ with a contractive approximate identity, $M(B)$ denotes its multiplier algebra, realized as the algebra of double centralizers, and $ZM(B)$ its center. Every nondegenerate contractive representation of $B$ extends uniquely to a unital contractive representation of $M(B)$; see \cite[Theorem~4.1 and Remark~4.2]{GardellaThiel}. If $T$ is a bounded operator, a bounded operator $S$ is called a \emph{generalized inverse} of $T$ when $TST=T$ and $STS=S$. On an $L^p$-space, by a \emph{multiplication band} we mean the range of multiplication by the characteristic function of a measurable set; we often simply call such a range a band.

We use the following groupoid and bundle terminology. An open subset $C$ of a groupoid $\mathcal G$ is an \emph{open bisection} if the restrictions of both source and range are homeomorphisms onto open subsets of $\mathcal G^{(0)}$. A \emph{Borel bisection} is a Borel subset on which both source and range are one-to-one. A subset $U\subseteq\mathcal G^{(0)}$ is \emph{full} if $r(s^{-1}(U))=\mathcal G^{(0)}$. A Banach algebra bundle $q:\mathscr A\to T$ with fiber $A$ is \emph{locally trivial} if every $t\in T$ has an open neighborhood $V$ and a homeomorphism $q^{-1}(V)\cong V\times A$ whose restriction on each fiber is an isometric algebra isomorphism. We write $\Gamma_0(T,\mathscr A)$ for the continuous sections vanishing at infinity, $\Gamma_c(T,\mathscr A)$ for the compactly supported continuous sections, and $\Gamma_b(T,\mathscr A)$ for the bounded continuous sections.

If $q:\mathscr A\to\mathcal G^{(0)}$ is such a bundle, then
\[
 r^*\mathscr A=\{(\gamma,a):\gamma\in\mathcal G,\ a\in\mathscr A_{r(\gamma)}\}
\]
is its pullback along the range map, with projection $(\gamma,a)\mapsto\gamma$. An action of $\mathcal G$ on $\mathscr A$ is a continuous family of isometric algebra isomorphisms
\[
 \Theta_\gamma:\mathscr A_{s(\gamma)}\longrightarrow\mathscr A_{r(\gamma)}
\]
with $\Theta_u=\operatorname{id}$ on unit fibers and $\Theta_{\gamma\eta}=\Theta_\gamma\Theta_\eta$ for composable arrows. 

\section{The induced algebras and the linking coefficient system}

Throughout Sections~2--6, let $p,G,H,Z,A,\alpha$, and $\beta$ satisfy the hypotheses of Theorem~A, and write $e_G$ and $e_H$ for the identity elements of $G$ and $H$. Set
\[
 X=Z/H,\qquad Y=G\backslash Z.
\]
The commuting actions induce left actions
\begin{equation}\label{eq:quotient-actions}
 g\cdot zH=(gz)H,
 \qquad
 h\cdot Gz=G(zh^{-1}).
\end{equation}
Write
\[
 q_H:Z\to X,\qquad q_G:Z\to Y
\]
for the quotient maps.

\begin{lem}[Simultaneous slices]\label{lem:simultaneous-slices}
For every $z\in Z$ there is an open neighborhood $S$ of $z$ such that
\[
 q_H|_S:S\longrightarrow q_H(S),\qquad
 q_G|_S:S\longrightarrow q_G(S)
\]
are homeomorphisms onto open subsets.
\end{lem}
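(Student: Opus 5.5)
The plan is to produce a single open neighborhood on which both discrete groups act with trivial translates, and then use that the quotient maps by discrete groups acting freely and properly are open and locally injective. Since $G$ and $H$ act by homeomorphisms, $q_H$ and $q_G$ are open maps: for $U$ open, $q_H^{-1}(q_H(U))=\bigcup_{h\in H}Uh$ is open. So if $S$ is open and $q_H|_S$, $q_G|_S$ are injective, both restrictions are continuous open bijections onto open sets, hence homeomorphisms. Everything therefore reduces to finding an open $S\ni z$ with $Sh\cap S=\emptyset$ for $h\neq e_H$ and $gS\cap S=\emptyset$ for $g\neq e_G$. Injectivity of $q_H|_S$ follows because $q_H(s)=q_H(s')$ means $s'=sh$, forcing $h=e_H$; the same argument gives injectivity of $q_G|_S$.

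To build $S$, fix a compact neighborhood $K$ of $z$, using local compactness. Properness of the discrete $H$-action says that $F_H=\{h\in H: Kh\cap K\neq\emptyset\}$ is finite, and likewise $F_G=\{g\in G: gK\cap K\neq\emptyset\}$ is finite. Freeness gives $zh\neq z$ for each $h\in F_H\setminus\{e_H\}$ and $gz\neq z$ for each $g\in F_G\setminus\{e_G\}$. Since $Z$ is Hausdorff and the action maps are continuous, for each such $h$ we can choose an open $W_h\ni z$ with $W_hh\cap W_h=\emptyset$: separate $z$ and $zh$ by disjoint open sets $O_1,O_2$ and take $W_h=O_1\cap O_2h^{-1}$. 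The $W_g$ for $g\in F_G\setminus\{e_G\}$ are chosen in the same way. Then set
\[
S=\operatorname{int}(K)\cap\bigcap_{h\in F_H\setminus\{e_H\}}W_h\cap\bigcap_{g\in F_G\setminus\{e_G\}}W_g ,
\]
which is a finite intersection of open neighborhoods of $z$. For $h\notin F_H$ we have $Sh\cap S\subseteq Kh\cap K=\emptyset$, and for $h\in F_H\setminus\{e_H\}$ we have $Sh\cap S\subseteq W_hh\cap W_h=\emptyset$; the $G$-side is handled identically.

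The only point that needs care is what "proper" means for a discrete group. I take it in the standard form: the map $(z,h)\mapsto(z,zh)$ is proper, which for discrete $H$ gives exactly the finiteness of $F_H$ for compact $K$. If only a weaker formulation is available, I would first derive this finiteness from it. Note that commutativity of the two actions is not needed here; the two slice conditions are simply imposed simultaneously by intersecting the neighborhoods.
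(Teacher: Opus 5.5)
Your proposal is correct and follows essentially the same route as the paper. The paper also uses properness to get a finite set of translates of a relatively compact neighborhood that meet it, separates $z$ from its finitely many nontrivial translates by freeness and the Hausdorff property, intersects the resulting $H$- and $G$-neighborhoods, and concludes from openness and injectivity of the quotient maps on $S$. Your compact $K$ with $\operatorname{int}(K)$ in place of the paper's $\overline U$ with $U$, and your explicit choice $W_h=O_1\cap O_2h^{-1}$, are only cosmetic differences.
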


\begin{proof}
We give the argument for the right $H$-action. Choose a relatively compact open neighborhood $U$ of $z$. Properness and discreteness imply that
\[
 F_H=\{h\in H:\overline U h\cap\overline U\neq\varnothing\}
\]
is finite. For $h\in F_H\setminus\{e_H\}$, freeness gives $zh\neq z$. By the Hausdorff property and continuity of the action, there is an open neighborhood $V_h\subseteq U$ of $z$ such that $V_hh\cap V_h=\varnothing$. Then
\[
 S_H=U\cap\bigcap_{h\in F_H\setminus\{e_H\}}V_h
\]
satisfies $S_Hh\cap S_H=\varnothing$ for every $h\neq e_H$: this is true by construction for $h\in F_H$, and for $h\notin F_H$ it follows from $S_H\subseteq U$. The same argument for the left $G$-action gives an open $S_G\ni z$ with $gS_G\cap S_G=\varnothing$ for $g\neq e_G$. Put $S=S_H\cap S_G$.

Both quotient maps are open, and their restrictions to $S$ are injective. Hence they are homeomorphisms onto the open sets $q_H(S)$ and $q_G(S)$.
\end{proof}

Define
\begin{align*}
 \BX=\Ind_H^Z(A,\beta)
  :=\{d\in C_b(Z,A):&\ d(zh)=\beta_{h^{-1}}(d(z)),\\
   &zH\mapsto\|d(z)\|\text{ belongs to }C_0(X)\},
\end{align*}
and
\begin{align*}
 \BY=\Ind_G^Z(A,\alpha)
  :=\{b\in C_b(Z,A):&\ b(gz)=\alpha_g(b(z)),\\
   &Gz\mapsto\|b(z)\|\text{ belongs to }C_0(Y)\}.
\end{align*}
The commuting actions induce isometric actions on these algebras:
\begin{equation}\label{eq:induced-actions}
 \sigma_g(d)(z)=\alpha_g(d(g^{-1}z)),
 \qquad
 \tau_h(b)(z)=\beta_h(b(zh)).
\end{equation}

The transformation groupoids
\[
 \cG=G\ltimes X,\qquad \cH=H\ltimes Y
\]
are equivalent through $Z$; see, for example, \cite[Example~2.33]{WilliamsToolkit}. We use the convention
\[
 s(g,x)=x,\quad r(g,x)=g\cdot x,
 \qquad
 s(h,y)=y,\quad r(h,y)=h\cdot y.
\]
Let
\[
 \cL=\cG\sqcup Z\sqcup Z^{\op}\sqcup\cH
\]
be the standard linking groupoid associated with the $(\cG,\cH)$-equivalence $Z$; see \cite[Section~2]{SimsWilliams} and \cite[Section~1.4]{MuhlyWilliams}. Its unit space is $X\sqcup Y$. The four pieces can be arranged by range (rows) and source (columns):
\[
 \begin{array}{c|cc}
    & X & Y\\ \hline
  X & \cG & Z\\
  Y & Z^{\op} & \cH
 \end{array}.
\]
This table records the direction of the off-diagonal arrows; it is not a multiplication table. For $z\in Z$ and its opposite arrow $z^{\op}\in Z^{\op}$,
\[
 r(z)=q_H(z),\quad s(z)=q_G(z),
 \qquad
 r(z^{\op})=q_G(z),\quad s(z^{\op})=q_H(z).
\]
The diagonal products are those of $\cG$ and $\cH$, the mixed products are induced by the left and right actions on $Z$ and $Z^{\op}$, and inversion interchanges $z$ and $z^{\op}$. Both $X$ and $Y$ are full clopen subsets of $\cL^{(0)}$. The diagonal pieces are \`etale because $G$ and $H$ are discrete, while Lemma~\ref{lem:simultaneous-slices} shows that every point of $Z$, and hence of $Z^{\op}$, has an open-bisection neighborhood in $\cL$. Since a topological groupoid is \`etale exactly when it is covered by open bisections \cite[Example~2.19]{BKM}, $\cL$ is a locally compact Hausdorff \`etale groupoid.

Consider the associated bundles
\[
 \cBX=(Z\times A)/H\longrightarrow X,
 \qquad
 \cBY=G\backslash(Z\times A)\longrightarrow Y.
\]
These are the associated-bundle realizations of the usual induced algebras; compare the induced algebra construction in \cite{Raeburn88} and the bundle approach to groupoid crossed product equivalence in \cite{MuhlyWilliams}. Explicitly,
\[
 (z,a)\cdot h=(zh,\beta_{h^{-1}}(a)),
 \qquad
 g\cdot(z,a)=(gz,\alpha_g(a)).
\]
We denote the corresponding classes by $[z,a]_H$ and $[z,a]_G$, respectively. If $S$ is an $H$-slice and $x\in q_H(S)$, the bijection $q_H|_S:S\to q_H(S)$ gives a unique $z_x\in S$ with $q_H(z_x)=x$. Every element of the fiber $(\cBX)_x$ has a unique representative of the form $[z_x,a]_H$: any other representative has first coordinate $z_xh$ for a unique $h\in H$, by freeness of the right action, and the quotient relation moves it uniquely back to $z_x$. Consequently
\[
 \cBX|_{q_H(S)}\longrightarrow q_H(S)\times A,
 \qquad
 [z,a]_H\longmapsto(q_H(z),a)\quad(z\in S),
\]
is a local trivialization whose restriction to each fiber is an isometric algebra isomorphism. If $S$ and $S'$ are two $H$-slices and $x\in q_H(S)\cap q_H(S')$, let $z\in S$ and $z'\in S'$ be the representatives of $x$. There is a unique $h(x)\in H$ with $z'=zh(x)$, and the corresponding change of fiber coordinates is
\[
 a\longmapsto\beta_{h(x)^{-1}}(a).
\]
Since $H$ is discrete, the map $x\mapsto h(x)$ is locally constant. Thus the transition maps are isometric algebra automorphisms, and $\cBX$ is a locally trivial Banach algebra bundle. The same argument, using $G$-slices and the automorphisms $\alpha_g$, applies to $\cBY$.

There are isometric algebra isomorphisms
\begin{equation}\label{eq:section-identifications}
 \BX\cong\Gamma_0(X,\cBX),
 \qquad
 \BY\cong\Gamma_0(Y,\cBY),
\end{equation}
given by
\[
 d\longmapsto\widehat d,\quad \widehat d(zH)=[z,d(z)]_H,
 \qquad
 b\longmapsto\widehat b,\quad \widehat b(Gz)=[z,b(z)]_G.
\]
The equivariance conditions make these formulas independent of the chosen representative, and the norm conditions are exactly the vanishing-at-infinity conditions. We shall henceforth use \eqref{eq:section-identifications} without changing notation, so that, for example, $d(x)$ denotes the value of the corresponding section in the fiber $(\cBX)_x$.

The diagonal groupoid actions are
\begin{align*}
 \Theta_{(g,zH)}:(\cBX)_{zH}&\longrightarrow(\cBX)_{(gz)H},
 &\Theta_{(g,zH)}([z,a]_H)&=[gz,\alpha_g(a)]_H,\\
 \Theta_{(h,Gz)}:(\cBY)_{Gz}&\longrightarrow(\cBY)_{G(zh^{-1})},
 &\Theta_{(h,Gz)}([z,a]_G)&=[zh^{-1},\beta_h(a)]_G.
\end{align*}
The second target is precisely the fiber over $h\cdot Gz$ from \eqref{eq:quotient-actions}. For $z\in Z$ define
\[
 \Theta_z:(\cBY)_{Gz}\longrightarrow(\cBX)_{zH},
 \qquad
 \Theta_z([z,a]_G)=[z,a]_H,
\]
and let $\Theta_{z^{\op}}=\Theta_z^{-1}$.

\begin{lem}[Linking coefficient action]
The preceding formulas define a continuous action $\Theta$ of $\cL$ on
\[
 \cB=\cBX\sqcup\cBY\longrightarrow X\sqcup Y
\]
by isometric algebra isomorphisms.
\end{lem}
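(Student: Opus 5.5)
The verification splits into three parts: each $\Theta_\gamma$ is well defined and isometric; the cocycle identities hold; and the assembled map is continuous. I will work throughout with representatives. Each fiber element of $\cBX$ is written $[z,a]_H$ and each fiber element of $\cBY$ is written $[z,a]_G$, and every formula is checked against a change of representative. The equivalence relations are $[zh,\beta_{h^{-1}}(a)]_H=[z,a]_H$ and $[gz,\alpha_g(a)]_G=[z,a]_G$.

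\emph{Well-definedness and isometry.} For $\Theta_{(g,zH)}$, replace $z$ by $zh$ and $a$ by $\beta_{h^{-1}}(a)$. The image becomes
\[
[gzh,\alpha_g\beta_{h^{-1}}(a)]_H=[gzh,\beta_{h^{-1}}\alpha_g(a)]_H=[gz,\alpha_g(a)]_H ,
\]
using that the actions commute. The same check works for $\Theta_{(h,Gz)}$. For $\Theta_z$, the subtle point is that the arrow $z$ is a point of $Z$ and not merely a class. The fiber $(\cBY)_{Gz}$ has a unique representative with first coordinate exactly $z$, by freeness, so $\Theta_z$ is defined by reading off that representative. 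In local trivializations given by slices through $z$, all of these maps are $a\mapsto\alpha_g(a)$, $a\mapsto\beta_h(a)$, or the identity. Each is therefore an isometric algebra isomorphism, since $\alpha,\beta$ are isometric automorphisms and the fiber trivializations are isometric. On units, $\Theta_{(e_G,x)}$ and $\Theta_{(e_H,y)}$ are the identity.

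\emph{Multiplicativity.} I check $\Theta_{\gamma\eta}=\Theta_\gamma\Theta_\eta$ case by case, following the possible products in $\cL$. The types are $\cG\cG$, $\cH\cH$, $\cG Z$, $Z\cH$, $Z^{\op}\cG$, $\cH Z^{\op}$, $ZZ^{\op}$ (landing in $\cG$) and $Z^{\op}Z$ (landing in $\cH$). Each case is a one-line computation.

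The $\cG Z$ case: $(g,q_H(z))\cdot z=gz$, and $\Theta_{gz}$ maps $[gz,\alpha_g(a)]_G=[z,a]_G$ to $[gz,\alpha_g(a)]_H=\Theta_{(g,zH)}[z,a]_H$.

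The $Z\cH$ case, where $z\cdot(h,Gzh)=zh$ under the paper's conventions: both sides of $\Theta_{zh}$ against $\Theta_z\Theta_{(h,\cdot)}$ reduce to $[zh,\beta_{h^{-1}}(a)]_H$.

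The $ZZ^{\op}$ case: $z'(z)^{\op}$ is defined when $Gz'=Gz$, say $z'=gz$, and it equals $(g,zH)$. Then $\Theta_{z'}\Theta_z^{-1}[z,a]_H=\Theta_{gz}[z,a]_G=[gz,\alpha_g(a)]_H$, which agrees with $\Theta_{(g,zH)}$.

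The remaining cases follow by inverting: $\Theta_{\gamma^{-1}}=\Theta_\gamma^{-1}$ holds by definition for $Z^{\op}$ and by direct check on the diagonal. I need to fix the precise conventions of the linking-groupoid multiplication from \cite{SimsWilliams}; this bookkeeping is the main place errors could creep in, though it is not conceptually hard.

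\emph{Continuity.} This is the step I expect to need the most care. The requirement is that $(\gamma,b)\mapsto\Theta_\gamma(b)$ be continuous from $s^*\cB$ to $\cB$. Since $\cL$ is covered by open bisections, it suffices to work locally. On the diagonal pieces, take a bisection $\{g\}\times V$ with $V$ inside $q_H(S)$ for an $H$-slice $S$. Then $gS$ is an $H$-slice containing $g\cdot V$, and in these trivializations $\Theta$ reads $(x,a)\mapsto(g\cdot x,\alpha_g(a))$. This is continuous because the action on $X$ is continuous and $\alpha_g$ is isometric. On $Z$, use a simultaneous slice $S$ from Lemma~\ref{lem:simultaneous-slices}: it trivializes $\cBY$ over $q_G(S)$ and $\cBX$ over $q_H(S)$ via the same points $z\in S$. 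In those coordinates $\Theta_z$ is $(q_G(z),a)\mapsto(q_H(z),a)$, which is continuous because the maps $q_G|_S$ and $q_H|_S$ are homeomorphisms. Inverses then handle $Z^{\op}$.
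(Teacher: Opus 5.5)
Your proposal is correct and follows essentially the same route as the paper. The diagonal maps are well defined because $\alpha$ and $\beta$ commute, $\Theta_z$ is read off from the unique representative with first coordinate $z$, the composition identities are checked on the mixed products and extended by inversion, and continuity comes from a simultaneous slice in which $\Theta_z$ is the identity on the $A$-coordinate. Your explicit computations for $\cG Z$, $Z\cH$ and $ZZ^{\op}$ just spell out what the paper states in one sentence, and they agree with its conventions $r(z)=q_H(z)$ and $s(z)=q_G(z)$.
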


\begin{proof}
The diagonal formulas are well defined because $\alpha$ and $\beta$ commute. For the off-diagonal map, every element of $(\cBY)_{Gz}$ has a unique representative with first coordinate $z$, so this formula is well defined. Under the canonical identifications of both the source and target fiber with $A$, it is the identity map; in particular it is an isometric algebra isomorphism with inverse $\Theta_{z^{\op}}$.

The mixed composition identities reduce to $\alpha_g\beta_h=\beta_h\alpha_g$, while the remaining identities follow from the diagonal actions and inversion. To check continuity on an off-diagonal piece, take a simultaneous slice $S$ from Lemma~\ref{lem:simultaneous-slices}. The trivializations of $\cBY$ over $q_G(S)$ and $\cBX$ over $q_H(S)$ both use the unique representative in $S$, and in these trivializations $\Theta_z$ is the identity on the $A$-coordinate. Hence the action is continuous. Continuity on the diagonal pieces is immediate from the given group actions.
\end{proof}

If $C\subseteq\cL$ is an open bisection and $m\in\Gamma_b(s(C),\cB)$, define
\[
 (\Theta_Cm)(r(\gamma))
 =\Theta_\gamma(m(s(\gamma)))
 \qquad(\gamma\in C).
\]
This is well defined because $r|_C$ is one-to-one. Since $\Theta_{\gamma\eta}=\Theta_\gamma\Theta_\eta$ for composable arrows, one has $\Theta_{C_1C_2}=\Theta_{C_1}\Theta_{C_2}$ whenever the composition is defined.

Put
\[
 \BXc=\Gamma_c(X,\cBX),\qquad
 \BYc=\Gamma_c(Y,\cBY).
\]
The diagonal compactly supported section algebras identify with
\[
 C_c(G,\BXc)\xrightarrow{\ \cong\ }\Gamma_c(\cG,r^*\cBX),
 \qquad
 C_c(H,\BYc)\xrightarrow{\ \cong\ }\Gamma_c(\cH,r^*\cBY).
\]
For the $G$-corner the displayed isomorphism is
\begin{equation}\label{eq:groupoid-crossed-identification}
 f\longmapsto\widetilde f,
 \qquad
 \widetilde f(g,x)=f(g)(gx),
\end{equation}
with inverse $\widetilde f\mapsto f$, where $f(g)(y)=\widetilde f(g,g^{-1}y)$. Compactness of the groupoid support implies that only finitely many $g\in G$ occur, and each coefficient has compact support in $X$. A direct calculation gives the convolution formula \eqref{eq:crossed-conv}. The $H$-corner is identical.

For $u\in\cL^{(0)}$, write $\cL^u=r^{-1}(u)$. On $\Gamma_c(\cL,r^*\cB)$ we use the convolution
\begin{equation}\label{eq:conv}
 (f_1*f_2)(\gamma)
 =\sum_{\eta\in\cL^{r(\gamma)}}
 f_1(\eta)\,\Theta_\eta\bigl(f_2(\eta^{-1}\gamma)\bigr).
\end{equation}
Because $\cL$ is \`etale, each range fiber is discrete; its intersection with the compact support of $f_1$ is therefore finite, so the sum in \eqref{eq:conv} is finite.

\begin{lem}\label{lem:induced-basic}
The algebras $\BX$ and $\BY$ are $L^p$-operator algebras and have contractive two-sided approximate identities contained in $\BXc$ and $\BYc$, respectively.
\end{lem}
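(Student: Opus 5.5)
We treat $\BX$; the argument for $\BY$ is symmetric, with $G$-slices and $\alpha$ in place of $H$-slices and $\beta$. There are two claims: that $\BX$ is an $L^p$-operator algebra, and that it has a contractive two-sided approximate identity in $\BXc$. Both come from the identification $\BX\cong\Gamma_0(X,\cBX)$ in \eqref{eq:section-identifications}, viewed as a closed subalgebra of $C_0$-type functions with values in $A$.

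\emph{$L^p$-operator structure.} Fix a nondegenerate isometric representation $\pi:A\to B(L^p(\mu))$ with $\mu$ $\sigma$-finite. Choose a countable dense subset $\{z_n\}$ of $Z$, which exists since $Z$ is second countable. On $\ell^p(\mathbb N,L^p(\mu))=L^p(\mathbb N\times\mu)$, which is again $\sigma$-finite, define
\[
 (\rho(d)\xi)_n=\pi(d(z_n))\xi_n .
\]
This is a diagonal operator with norm $\sup_n\|d(z_n)\|$. By continuity of $d$ and density of $\{z_n\}$, this supremum equals $\sup_z\|d(z)\|=\|d\|$. So $\rho$ is an isometric homomorphism, and $\BX$ is an $L^p$-operator algebra. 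Nondegeneracy of $\rho$ is not needed for this claim.

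\emph{Approximate identity.} Let $(e_\lambda)$ be a contractive two-sided approximate identity of $A$. The plan is to build, for each finite set $F\subseteq\BX$ and $\varepsilon>0$, an element $u\in\BXc$ with $\|u\|\le1$ and $\|ud-d\|,\|du-d\|<\varepsilon$ for all $d\in F$. Directing these elements by the pairs $(F,\varepsilon)$ then gives the net.

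1. \emph{Reduce to a compact set.} Since $x\mapsto\|d(x)\|$ lies in $C_0(X)$, there is a compact $K\subseteq X$ outside of which every $d\in F$ has norm less than $\varepsilon/3$.

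2. \emph{Cover $K$ by slices.} By Lemma~\ref{lem:simultaneous-slices}, we can cover $K$ by finitely many open sets $q_H(S_i)$, $i=1,\dots,N$, where each $S_i$ is an $H$-slice. Over $q_H(S_i)$ the bundle trivializes as $q_H(S_i)\times A$. Choose a partition of unity: $\phi_i\in C_c(q_H(S_i))$ with $0\le\phi_i$, $\sum_i\phi_i\le1$ everywhere, and $\sum_i\phi_i=1$ on $K$.

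3. \emph{The main obstacle.} The naive candidate $u=\sum_i\phi_i\,[\,\cdot\,,e_\lambda]$ has two defects. First, the local sections $x\mapsto[z_x,e_\lambda]_H$ over different slices disagree, since transition maps are $\beta_{h(x)^{-1}}$ and $\beta_h(e_\lambda)\neq e_\lambda$ in general. Second, a convex combination of different approximate-unit elements need not act approximately as the identity. To fix both, use compactness. The set
\[
 \{d(z_x): d\in F,\ x\in K\cap\overline{\supp\phi_i}\}
\]
is compact in $A$, and only finitely many transition elements $h$ occur between the finitely many slices over $K$. Work with the finite set $h_1,\dots,h_m\in H$ of these transitions, and the corresponding compact set $C\subseteq A$ consisting of the translates $\beta_{h_j}(d(z))$. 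Choose a single $\lambda$ with
\[
 \|e_\lambda c-c\|,\ \|ce_\lambda-c\|<\varepsilon/3 \quad\text{for all } c\in C.
\]
This is possible because an approximate identity converges uniformly on compact sets. Now define $u=\sum_i\phi_i u_i$, where $u_i(x)=[z^{(i)}_x,e_\lambda]_H$ in the $i$-th trivialization. At a point $x$, each $u_i(x)$ acts on $d(x)$ as $\beta_{h}(e_\lambda)$ acts on $d(z^{(i)}_x)$, up to a transition automorphism. Since the $\beta_h$ are isometric automorphisms, $\beta_h(e_\lambda)$ is a left and right approximate unit for $\beta_h(C)$, and we included these translates in $C$. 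Hence $\|u_i(x)d(x)-d(x)\|<\varepsilon/3$. Taking the convex combination with $\sum_i\phi_i(x)=1$ on $K$ then gives $\|ud-d\|<\varepsilon$ on $K$.

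4. \emph{Off $K$ and the norm bound.} Off $K$, the estimates $\|u\|\le1$ and $\|d\|<\varepsilon/3$ bound the error by $2\varepsilon/3$. We have $\|u\|\le\sum_i\phi_i\le1$, and $u$ is compactly supported, so $u\in\BXc$. Right multiplication is handled symmetrically.
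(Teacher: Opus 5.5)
Your proposal is correct and follows essentially the paper's route: an isometric diagonal representation $d\mapsto\bigoplus_n\rho_A(d(z_n))$ over a countable dense set of $Z$, and an approximate identity $u=\sum_i\phi_iu_i$ built from a partition of unity subordinate to slice trivializations, with constant approximate-unit sections in each chart. Two remarks. First, the ``obstacles'' in your step~3 are not real: in the $i$th trivialization, $u_i(x)d(x)-d(x)$ is exactly $e_\lambda d(z^{(i)}_x)-d(z^{(i)}_x)$, so no transition elements or translates are needed, and the triangle inequality handles convex combinations (the paper even uses a different approximate-unit element in each chart). Second, the paper also proves that the representation is nondegenerate; this is not part of the statement, but it is needed later so that $B_X$ satisfies the standing hypothesis in the crossed-product conventions.
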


\begin{proof}
We give the argument for $\BX$. First construct the approximate identity. Let $F\subseteq\BX$ be finite and let $\varepsilon>0$. Choose a compact set $K\subseteq X$ such that
\[
 \|d(x)\|<\varepsilon\qquad(x\notin K,\ d\in F).
\]
For each $x\in K$, local triviality of $\cBX$ and local compactness of $X$ give a trivializing open neighborhood $U_x$ whose closure is compact and contained in a trivializing neighborhood. Choose finitely many of these, say $U_1,\ldots,U_m$, covering $K$. Choose
\[
 \chi\in C_c\Bigl(\bigcup_jU_j\Bigr),\qquad 0\leq\chi\leq1,
 \qquad \chi|_K=1,
\]
and functions $\psi_j\in C_c(U_j)$ such that $0\leq\psi_j\leq1$ and $\sum_j\psi_j=1$ on $\supp(\chi)$. Put $\varphi_j=\chi\psi_j$. Then
\[
 \varphi_j\in C_c(U_j),\qquad
 \sum_j\varphi_j=1\text{ on }K,
 \qquad
 0\leq\sum_j\varphi_j\leq1\text{ on }X.
\]

Fix a trivialization over $U_j$. For $d\in F$, let $d_j:U_j\to A$ denote its coordinate function. Since $\supp(\varphi_j)$ is compact, each $d_j(\supp\varphi_j)$ is compact in $A$; hence the finite union over $d\in F$ is compact. The contractive two-sided approximate identity of $A$ therefore contains an element $a_j$ with $\|a_j\|\leq1$ such that left and right multiplication by $a_j$ approximate every element of this compact set within $\varepsilon$. Let $a_j^{(j)}$ be the constant section in this trivialization and define, extending each summand by zero,
\[
 u=\sum_{j=1}^m\varphi_j a_j^{(j)}\in\BXc.
\]
Pointwise,
\[
 \|u(x)\|\leq\sum_j\varphi_j(x)\leq1,
\]
so $\|u\|\leq1$. If $x\in K$, then for $d\in F$,
\begin{align*}
 \|u(x)d(x)-d(x)\|
 &\leq\sum_j\varphi_j(x)\|a_j^{(j)}(x)d(x)-d(x)\|<\varepsilon,
\end{align*}
and the analogous right-hand estimate holds. If $x\notin K$, then
\[
 \|u(x)d(x)-d(x)\|\leq2\|d(x)\|<2\varepsilon,
\]
and similarly on the right. Directing these choices by finite subsets and tolerances gives a contractive two-sided approximate identity $(u_\lambda)\subseteq\BXc$.

Now fix a nondegenerate isometric representation
\[
 \rho_A:A\longrightarrow B(L^p(\Omega,\mu))
\]
on a $\sigma$-finite $L^p$-space, and choose a countable dense set $(z_n)_{n\geq1}$ in $Z$. Define
\[
 \rho_X(d)=\bigoplus_{n\geq1}\rho_A(d(z_n))
\]
on $\ell^p(\mathbb N,L^p(\Omega,\mu))$. It is contractive, and continuity of $x\mapsto\|d(x)\|$ together with density of $(q_H(z_n))$ gives
\[
 \|\rho_X(d)\|=\sup_n\|d(z_n)\|=\|d\|.
\]
Thus $\rho_X$ is isometric.

It remains to check nondegeneracy. For each fixed $z_n$, evaluation $\operatorname{ev}_{z_n}:\BX\to A$ is surjective: choose a local trivialization near $q_H(z_n)$ and multiply the constant local section with prescribed value by a compactly supported scalar function equal to one at $q_H(z_n)$. Evaluating $u_\lambda d\to d$ at $z_n$ shows that $(u_\lambda(z_n))$ is a contractive approximate identity for $A$. Since $\rho_A$ is nondegenerate,
\[
 \rho_A(u_\lambda(z_n))\xi\longrightarrow\xi
\]
for every $\xi\in L^p(\Omega,\mu)$. Hence $\rho_X(u_\lambda)\xi\to\xi$ first for finitely supported vectors in the $\ell^p$-sum and then for all vectors by density and contractivity. Thus $\rho_X$ is nondegenerate. The proof for $\BY$ is symmetric.
\end{proof}

The compactly supported section algebras $\BXc$ and $\BYc$ are dense in $\BX$ and $\BY$: if $d\in\BX$, choose $\chi\in C_c(X)$ equal to one on a compact set outside which $\|d(x)\|$ is small, and use $\chi d$. Consequently $C_c(G,\BXc)$ is dense in $C_c(G,\BX)$ for the $\ell^1$ norm: on a fixed finite group support, approximate each coefficient in $\BX$ by one in $\BXc$. Since both the full and reduced crossed product norms are bounded by $\|\cdot\|_1$, the dense subalgebra $C_c(G,\BXc)$ in the $G$-corner identification has the same full crossed product completion as $C_c(G,\BX)$, and likewise the same reduced crossed product completion. The corresponding statement holds for the $H$-corner.

\section{Coefficient matrix dilation}

\subsection{Borel bands and local multipliers}

The proof of the main theorem rests on a coefficient version of the spatial matrix dilation from \cite[Section~5]{CDW}. Let $(\pi,U)$ be a nondegenerate contractive covariant representation of $(G,\BX,\sigma)$ on a $\sigma$-finite $L^p$-space $\mathcal E=L^p(\Omega,\mu)$; as above, $U_g=U(g)$ is a surjective isometry for every $g\in G$.

Pointwise multiplication by scalar functions gives a central multiplier action
\[
 C_0(X)\longrightarrow ZM(\BX),
 \qquad
 \varphi\longmapsto M_\varphi,
 \qquad
 (M_\varphi d)(x)=\varphi(x)d(x).
\]
Since $\BX$ has a contractive approximate identity, $\pi$ has a unique contractive extension $\overline\pi:M(\BX)\to B(\mathcal E)$. Define
\begin{equation}\label{eq:rho}
 \rho(\varphi)=\overline\pi(M_\varphi)
 \qquad(\varphi\in C_0(X)).
\end{equation}
The representation $\rho$ is nondegenerate because $\overline{C_0(X)\BX}=\BX$ and $\pi$ is nondegenerate.

For $p\neq2$, \cite[Theorem~2.13]{BKM} shows that the central representation $\rho$ already acts by multiplication operators on the given $L^p(\Omega,\mu)$, since the measure is $\sigma$-finite and hence localizable. For $p=2$, we instead keep the original Hilbert space $\mathcal E$: the contractive representation $\rho:C_0(X)\to B(\mathcal E)$ is a nondegenerate $*$-representation and therefore has a projection-valued spectral measure on the Borel subsets of $X$. This avoids replacing $\mathcal E$ by the possibly non-$\sigma$-finite localizable measure space in \cite[Lemma~2.17]{BKM}.

For completeness, we describe the bounded-Borel extension when $p\neq2$. Write $\rho(\varphi)=M_{\rho_0(\varphi)}$ with a contractive homomorphism $\rho_0:C_0(X)\to L^\infty(\Omega,\mu)$. The nondegenerate multiplier extension is again given by multiplication operators: if $(e_\lambda)$ is a contractive approximate identity for $C_0(X)$ and $f\in C_b(X)$, then $\rho(e_\lambda f)$ converges strongly to the multiplier extension at $f$, and the multiplication operators are strongly closed. We obtain a unital contractive homomorphism
\[
 \overline\rho_0:C_b(X)\longrightarrow L^\infty(\Omega,\mu).
\]
As both algebras are commutative $C^*$-algebras, this homomorphism is a $*$-homomorphism. Restriction to $C_0(X)$, followed by the same multiplication action on $L^2(\Omega,\mu)$, gives a nondegenerate $*$-representation. Nondegeneracy follows because a positive-measure set on which all the multiplication operators vanish would, by semifiniteness, support a nonzero $L^p$-vector annihilated by $\rho(C_0(X))$. The Borel functional calculus of this Hilbert-space representation consists of multiplication operators, which also act contractively on the original $L^p$-space.
The strong countable additivity of the resulting
Borel projections also holds on the original
$L^p$-space. Indeed, for pairwise disjoint
Borel sets $T_n$, the corresponding spectral
projections on $L^2(\Omega,\mu)$ are multiplication
by pairwise disjoint characteristic functions,
whose sum agrees almost everywhere with the
multiplier associated with $\bigcup_n T_n$.
Dominated convergence therefore gives
\[
 P_{\bigcup_n T_n}\xi
 =\lim_{N\to\infty}\sum_{n=1}^N P_{T_n}\xi
 \qquad(\xi\in L^p(\Omega,\mu)),
\]
with convergence in the $L^p$-norm.

In either case let
\[
 \widetilde\rho:\mathcal B_b(X)\longrightarrow B(\mathcal E)
\]
be the resulting bounded-Borel extension, where $\mathcal B_b(X)$ denotes the bounded Borel functions. For every Borel set $T\subseteq X$ define
\[
 P_T=\widetilde\rho(1_T).
\]
When $p\neq2$, $P_T$ is a multiplication-band projection; when $p=2$, it is the projection supplied by the spectral measure on the original Hilbert space. In both cases, the projections are contractive, satisfy $P_SP_T=P_{S\cap T}$, and obey countable additivity in the strong-operator topology. In particular, for pairwise disjoint Borel sets $T_n$ and $\xi\in\mathcal E$,
\begin{equation}\label{eq:band-pythagoras}
 \biggl\|P_{\bigcup_n T_n}\xi\biggr\|_p^p
  =\sum_n\|P_{T_n}\xi\|_p^p.
\end{equation}
For $p\neq2$ this is disjoint-support additivity; for $p=2$ it is orthogonality of spectral subspaces. Thus every $P_T\mathcal E$ and every countable $\ell^p$-sum of such spaces is (up to isometry) an admissible $L^p$-space over a $\sigma$-finite measure space: for $p\neq2$ these are literal $L^p$ bands, and for $p=2$ they are Hilbert spaces, which admit $\sigma$-finite $L^2$ realizations.

The covariance relation for $(\pi,U)$ extends to the Borel functional calculus. For $p=2$, this follows from uniqueness of the spectral measure and covariance of the underlying $*$-representation. For $p\neq2$, the bounded-Borel functional calculus is obtained by uniformly bounded strong-operator limits of continuous functions, and conjugation by $U_g$ preserves these limits. Consequently,
\begin{equation}\label{eq:band-cov}
 U_gP_TU_g^{-1}=P_{gT}.
\end{equation}

In what follows we call the ranges of the $P_T$ \emph{Borel bands}, including at $p=2$, where they are more precisely orthogonal spectral subspaces. Only the projection identities and \eqref{eq:band-pythagoras} will be needed.
We need one local multiplier observation. If $O\subseteq X$ is open, write
\[
 B_{X,O}=\Gamma_0(O,\cBX|_O),
\]
viewed as the closed ideal of $\BX$ obtained by extension by zero. Applying the construction of the compactly supported contractive two-sided approximate identity from the preceding section to the restricted locally trivial bundle $\cBX|_O\to O$ shows that $B_{X,O}$ also has such an approximate identity.

\begin{lem}\label{lem:local-multiplier}
For every open $O\subseteq X$,
\[
 \overline{\pi(B_{X,O})\mathcal E}=P_O\mathcal E.
\]
The induced nondegenerate representation of $B_{X,O}$ on $P_O\mathcal E$ extends uniquely to a unital contractive representation
\[
 \widetilde\pi_O:M(B_{X,O})\longrightarrow B(P_O\mathcal E).
\]
If $m\in\Gamma_b(O,\cBX)$, pointwise left and right multiplication by $m$ define a multiplier of $B_{X,O}$, and the corresponding operator $\widetilde\pi_O(m)$ satisfies
\[
 \|\widetilde\pi_O(m)\|\leq\|m\|_\infty
\]
and commutes with $P_T$ for every Borel $T\subseteq O$.
\end{lem}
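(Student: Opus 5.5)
We prove the three assertions in order, keeping the strong-operator description of $\overline\pi$ on multipliers in view throughout.

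\emph{Step 1: the range identity.} Let $(e_\lambda)$ be a contractive approximate identity for $C_0(O)$. Then for $d\in B_{X,O}$ we have $M_{e_\lambda}d\to d$ in norm, because $\|d(x)\|$ vanishes at infinity on $O$. Hence
\[
 \pi(d)\xi=\lim_\lambda\rho(e_\lambda)\pi(d)\xi\in\overline{\rho(C_0(O))\mathcal E}.
\]
The spectral description of $\widetilde\rho$ shows that $\overline{\rho(C_0(O))\mathcal E}=P_O\mathcal E$. Indeed, $\rho(e_\lambda)\to P_O$ strongly, by dominated convergence on the $L^p$ side and monotone convergence of spectral measures at $p=2$. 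This gives $\overline{\pi(B_{X,O})\mathcal E}\subseteq P_O\mathcal E$.

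For the reverse inclusion, note that $C_0(O)\BX\subseteq B_{X,O}$. Given $\varphi\in C_0(O)$, $\xi\in\mathcal E$ and the approximate identity $(u_\mu)$ of $\BX$, we get
\[
 \rho(\varphi)\xi=\lim_\mu\overline\pi(M_\varphi)\pi(u_\mu)\xi=\lim_\mu\pi(M_\varphi u_\mu)\xi\in\overline{\pi(B_{X,O})\mathcal E}.
\]
The left side is dense in $P_O\mathcal E$, so equality holds.

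\emph{Step 2: the multiplier extension.} By Step 1, $\pi|_{B_{X,O}}$ is a nondegenerate contractive representation on the $L^p$-space (or Hilbert space) $P_O\mathcal E$. The ideal $B_{X,O}$ has a contractive approximate identity $(v_\nu)$, as noted before the lemma. So \cite[Theorem~4.1 and Remark~4.2]{GardellaThiel} gives the unique unital contractive extension $\widetilde\pi_O$, given by
\[
 \widetilde\pi_O(T)\pi(d)\xi=\pi(Td)\xi.
\]

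\emph{Step 3: bounded sections as multipliers.} For $m\in\Gamma_b(O,\cBX)$, the pointwise products $md$ and $dm$ are continuous, and their norms are bounded by $\|m\|_\infty\|d(x)\|$, so they lie in $B_{X,O}$. Associativity in the fibers gives the double-centralizer identities. The multiplier norm is at most $\|m\|_\infty$, and contractivity of $\widetilde\pi_O$ then gives $\|\widetilde\pi_O(m)\|\le\|m\|_\infty$.

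\emph{Step 4: commutation with $P_T$.} This is the main point. First, for $\varphi\in C_0(O)$, the operator $\widetilde\pi_O(m)$ commutes with $\rho(\varphi)|_{P_O\mathcal E}$. This holds because $mM_\varphi d=M_\varphi(md)$, and $\rho(\varphi)$ agrees with $\widetilde\pi_O(M_\varphi)$ on $P_O\mathcal E$, since both act as $\pi(\varphi d)$ on $\pi(d)\xi$.

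Next, the set of bounded Borel functions $f$ on $O$ for which $\widetilde\rho(f)$ commutes with $\widetilde\pi_O(m)$ contains $C_0(O)$ and is closed under bounded pointwise limits. For $p\ne2$, $\widetilde\rho$ is built from uniformly bounded strong limits: first $C_b$ via $\rho(e_\lambda f)$, then Borel functions via dominated convergence of multiplication operators. Commutation with a fixed bounded operator passes to strong limits. For $p=2$, the commutant of $\rho(C_0(O))$ is a von Neumann algebra containing the spectral projections. A monotone class argument then gives that $P_T$ commutes with $\widetilde\pi_O(m)$ for every Borel $T\subseteq O$.

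The delicate point is ensuring that $\widetilde\rho$ on $P_O\mathcal E$ really is generated by strong limits from $C_0(O)$ rather than from $C_0(X)$. This is handled by $P_T=P_TP_O$ together with approximating $1_T$ for $T\subseteq O$ within $C_0(O)$-generated Baire/Borel limits, since $O$ is itself locally compact and second countable.
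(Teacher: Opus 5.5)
Your proposal is correct and follows the paper's proof essentially step for step. It uses the same three ingredients: the range identity via $B_{X,O}=\overline{C_0(O)\BX}$, nondegeneracy of $\pi$ and $\rho(e_\lambda)\to P_O$ strongly; the Gardella--Thiel multiplier extension; and commutation with $P_T$, because $\widetilde\pi_O(m)$ commutes with $\rho(C_0(O))$ and commutation survives the bounded-Borel strong limits that produce $P_T$.

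One small slip: at $p=2$ the relevant fact is not that the commutant of $\rho(C_0(O))$ contains the spectral projections. What you need is that $\widetilde\pi_O(m)$ lies in that commutant and that the spectral calculus is strongly continuous along bounded pointwise convergent sequences (equivalently, that $P_T$ lies in the bicommutant). Your monotone-class argument already supplies this.
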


\begin{proof}
Since $B_{X,O}=\overline{C_0(O)\BX}$ and $\rho(\varphi)\pi(d)=\pi(\varphi d)$,
\[
 \overline{\pi(B_{X,O})\mathcal E}
 =\overline{\rho(C_0(O))\pi(\BX)\mathcal E}.
\]
Nondegeneracy of $\pi$ gives
\[
 \overline{\rho(C_0(O))\pi(\BX)\mathcal E}
 =\overline{\rho(C_0(O))\mathcal E}.
\]
If $(\varphi_\lambda)$ is a contractive approximate identity for $C_0(O)$, then the scalar operators $\rho(\varphi_\lambda)$ converge strongly to $P_O$. Hence
\[
 \overline{\rho(C_0(O))\mathcal E}=P_O\mathcal E.
\]

For $m\in\Gamma_b(O,\cBX)$, define on $B_{X,O}$
\[
 L_m(d)(x)=m(x)d(x),\qquad R_m(d)(x)=d(x)m(x).
\]
These maps have norm at most $\|m\|_\infty$ and satisfy the double-centralizer identities, so they determine a multiplier of $B_{X,O}$. The representation of $B_{X,O}$ on $P_O\mathcal E$ is nondegenerate by the first part and therefore extends to this multiplier, giving $\widetilde\pi_O(m)$. Pointwise scalar multiplication is central, so $\widetilde\pi_O(m)$ commutes with $\rho(\varphi)$ for $\varphi\in C_0(O)$. The commutant of $\widetilde\pi_O(m)$ is strongly closed; since every $P_T$ belongs to the bounded-Borel strong closure of these scalar operators, $\widetilde\pi_O(m)$ commutes with $P_T$ for every Borel $T\subseteq O$.
\end{proof}

Throughout, we also regard $\widetilde\pi_O(m)$ as an operator on $\mathcal E$ by extending it by zero on $(I-P_O)\mathcal E$. The same convention applies to the local multiplier representations in Section~6. The local multiplier extensions also agree on Borel sets where the underlying continuous coefficient sections coincide. We will use this observation both in the covariance calculation and in the regularity argument.

\begin{lem}\label{lem:borel-localization}
Let $U,V\subseteq X$ be open, let $m\in\Gamma_b(U,\cBX)$ and $n\in\Gamma_b(V,\cBX)$, and let $S\subseteq U\cap V$ be Borel. If $m(x)=n(x)$ for $x\in S$, then
\[
 P_S\widetilde\pi_{U}(m)
 =P_S\widetilde\pi_{V}(n).
\]
\end{lem}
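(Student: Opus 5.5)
The plan is to reduce to a single local multiplier. First I will replace the two sections by their common restriction to $W=U\cap V$, and then show that a section vanishing on $S$ is killed by $P_S$.

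\emph{Step 1 (restriction to $W$).} Put $m'=m|_W$ and $n'=n|_W$. I claim that
\[
 \widetilde\pi_U(m)P_W=\widetilde\pi_W(m'),
\]
where the right side is extended by zero as usual, and similarly for $n$. The space $B_{X,W}$ is a closed ideal of $B_{X,U}$. For $d\in B_{X,W}$ we have $md=m'd$ pointwise, so
\[
 \widetilde\pi_U(m)\pi(d)=\pi(md)=\widetilde\pi_W(m')\pi(d).
\]
By Lemma~\ref{lem:local-multiplier}, $\overline{\pi(B_{X,W})\mathcal E}=P_W\mathcal E$. Hence the two operators agree on $P_W\mathcal E$, and both vanish on $(I-P_W)\mathcal E$. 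The lemma also gives that $\widetilde\pi_U(m)$ commutes with $P_W$ and with $P_S$, since $S\subseteq W\subseteq U$. Using $P_SP_W=P_S$, we get
\[
 P_S\widetilde\pi_U(m)=P_S\widetilde\pi_W(m').
\]
The same holds for $n$. By linearity of $\widetilde\pi_W$ on $\Gamma_b(W,\cBX)$, the statement reduces to the following claim: if $k\in\Gamma_b(W,\cBX)$ vanishes on the Borel set $S\subseteq W$, then $P_S\widetilde\pi_W(k)=0$.

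\emph{Step 2 (the vanishing claim; the main obstacle).} Since $S$ is only Borel, we cannot multiply $k$ by a continuous cutoff supported in $S$. Instead I will use the continuity of $x\mapsto\|k(x)\|$. Fix $\varepsilon>0$ and set
\[
 O_\varepsilon=\{x\in W:\|k(x)\|<\varepsilon\}.
\]
This set is open and contains $S$, so $P_S=P_SP_{O_\varepsilon}$. By Step 1, applied with $U=W$ and $O_\varepsilon$ in place of the smaller open set, together with commutation, we obtain
\[
 P_S\widetilde\pi_W(k)=P_S\,\widetilde\pi_{O_\varepsilon}(k|_{O_\varepsilon}).
\]
Lemma~\ref{lem:local-multiplier} bounds the norm of the right-hand side by $\|k|_{O_\varepsilon}\|_\infty\le\varepsilon$, using that $\|P_S\|\le1$. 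Letting $\varepsilon\to0$ gives $P_S\widetilde\pi_W(k)=0$, which proves the lemma.

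The delicate points are the bookkeeping of extension by zero and the commutation of $\widetilde\pi_U(m)$ with $P_W$ and $P_S$; both are supplied by Lemma~\ref{lem:local-multiplier}. The key idea is the open neighbourhood $O_\varepsilon$, which converts the Borel vanishing condition into a norm estimate for a continuous section.
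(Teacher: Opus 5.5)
Your proof is correct and follows essentially the same route as the paper: restrict both multipliers to $U\cap V$, then bound $P_S\widetilde\pi_{U\cap V}(m-n)$ by $\varepsilon$ using the open set where $\|m-n\|<\varepsilon$, which contains $S$. Your Step~1 proves the restriction identity directly from the density of $\pi(B_{X,W})\mathcal E$ in $P_W\mathcal E$; the paper simply attributes that identity to uniqueness of the multiplier extension.
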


\begin{proof}
If $W\subseteq U$ is open, uniqueness of the multiplier extension for the nondegenerate representation of $B_{X,W}$ shows that the restriction of $\widetilde\pi_{U}(m)$ to $P_W\mathcal E$ is $\widetilde\pi_{W}(m|_W)$. We may therefore restrict both operators to $U\cap V$. Let $r=m-n$. For $\varepsilon>0$ the open set
\[
 W_\varepsilon=\{x\in U\cap V:\|r(x)\|<\varepsilon\}
\]
contains $S$. Since the local multiplier operator commutes with all Borel band projections,
\[
 P_S\widetilde\pi_{U\cap V}(r)
 =P_SP_{W_\varepsilon}\widetilde\pi_{U\cap V}(r).
\]
Since $\widetilde\pi_{U\cap V}(r)$ commutes with $P_{W_\varepsilon}$, and its restriction to $P_{W_\varepsilon}\mathcal E$ agrees with $\widetilde\pi_{W_\varepsilon}(r|_{W_\varepsilon})$, Lemma~\ref{lem:local-multiplier}
gives
\[
\begin{aligned}
\bigl\|P_S\widetilde\pi_{U\cap V}(r)\bigr\|
&\leq
\bigl\|\widetilde\pi_{W_\varepsilon}
(r|_{W_\varepsilon})\bigr\|\\
&\leq \|r|_{W_\varepsilon}\|_\infty
\leq\varepsilon.
\end{aligned}
\]
As this holds for every $\varepsilon>0$, we obtain $P_S\widetilde\pi_{U\cap V}(r)=0$, which is the required equality.
\end{proof}

\subsection{Coordinates and the matrix dilation}

We now choose the coordinate system used in the matrix dilation. The hypotheses of \cite[Lemma~5.5]{CDW} apply to $(\cL,X)$: the linking groupoid is \`etale, $X$ is full and clopen, and $Y=\cL^{(0)}\setminus X$ is a paracompact locally compact Hausdorff space. Concretely, fullness gives for each $y\in Y$ an arrow from $X$ to $y$; an open-bisection neighborhood of that arrow can be cut down so that its source lies in $X$ and its range in $Y$, after which local compactness provides a relatively compact range neighborhood. A locally finite refinement yields relatively compact open sets $V_i\subseteq Y$ and relatively compact open bisections $B_i\subseteq\cL$ such that
\[
 r(B_i)=V_i,\qquad O_i:=s(B_i)\subseteq X.
\]
Because $Y$ is second countable, the locally finite family $(V_i)$ is countable: each nonempty $V_i$ contains an element of a fixed countable basis, and local finiteness permits only finitely many $V_i$ to contain any one basis element. We therefore index it by $i\geq1$.

Set
\[
 R_i=V_i\setminus\bigcup_{j<i}V_j,
 \qquad
 \widetilde B_i=B_i\cap r^{-1}(R_i),
 \qquad
 T_i=s(\widetilde B_i).
\]
Then $(R_i)_{i\geq1}$ is a pairwise disjoint Borel partition of $Y$. For notational uniformity in the matrix construction, adjoin a zeroth coordinate corresponding to the original $X$-corner by setting
\[
 R_0=T_0=X,\qquad B_0=\widetilde B_0=X.
\]
Thus $R_i\subseteq Y$ for $i\geq1$, whereas the exceptional zeroth piece $R_0$ is $X$; in particular
\[
 \cL^{(0)}=R_0\sqcup\bigsqcup_{i\geq1}R_i=X\sqcup Y.
\]
For every $i\geq0$, the Borel bisection $\widetilde B_i$ gives a Borel bijection
\[
 \theta_i:T_i\longrightarrow R_i,
 \qquad
 \theta_i(u)=r\bigl((s|_{\widetilde B_i})^{-1}(u)\bigr),
\]
with $\theta_0=\operatorname{id}_X$. The sets $R_i,T_i$ and $\widetilde B_i$ need not be open. They determine the Borel projections $P_T$ and the coordinate maps $\theta_i$; coefficient sections are moved between fibers only by the continuous bundle action along the open bisections $B_i$. Figure~\ref{fig:coordinate-system} summarizes the geometry of these coordinates.

\begin{figure}[htbp]
\centering
\begin{tikzpicture}[x=0.93cm,y=1cm,>=stealth]
  \draw[thick] (0,3.45) -- (11,3.45);
  \node[left] at (0,3.45) {$X$};

  \draw[rounded corners] (0.75,3.10) rectangle (3.55,3.80);
  \node[above] at (2.15,3.80) {$O_1$};
  \draw[very thick] (1.35,3.45) -- (2.95,3.45);
  \node[below] at (2.15,3.41) {$T_1$};

  \draw[rounded corners] (4.35,3.10) rectangle (7.15,3.80);
  \node[above] at (5.75,3.80) {$O_2$};
  \draw[very thick] (4.95,3.45) -- (6.55,3.45);
  \node[below] at (5.75,3.41) {$T_2$};

  \draw[rounded corners] (7.70,3.10) rectangle (10.40,3.80);
  \node[above] at (9.05,3.80) {$O_3$};
  \draw[very thick] (8.25,3.45) -- (9.85,3.45);
  \node[below] at (9.05,3.41) {$T_3$};

  \draw[gray!55,line width=0.4pt]
    (0,1.55) -- (11,1.55);
  \node[left] at (0,1.55) {$Y$};

  \draw[black,line width=2.2pt]
    (0.60,1.55) -- (4.15,1.55);
  \node[above] at (2.15,1.61) {$R_1$};

  \draw[black,line width=2.2pt]
    (4.25,1.55) -- (7.35,1.55);
  \node[above] at (5.5,1.61) {$R_2$};

  \draw[black,line width=2.2pt]
    (7.45,1.55) -- (10.50,1.55);
  \node[above] at (9.225,1.61) {$R_3$};

  \draw[black,line width=0.5pt] (4.20,1.38) -- (4.20,1.72);
  \draw[black,line width=0.5pt] (7.40,1.38) -- (7.40,1.72);

  \draw[->] (2.15,3.10) .. controls (2.15,2.60) and (2.35,2.05) .. (2.40,1.68)
    node[pos=0.45,left] {$\theta_1$};
  \draw[->] (5.75,3.10) .. controls (5.75,2.60) and (5.80,2.05) .. (5.80,1.68)
    node[pos=0.45,left] {$\theta_2$};
  \draw[->] (9.05,3.10) .. controls (9.05,2.60) and (8.95,2.05) .. (8.95,1.68)
    node[pos=0.45,left] {$\theta_3$};

  \draw (0.60,0.82) -- (4.20,0.82);
  \draw (0.60,0.74) -- (0.60,0.90);
  \draw (4.20,0.74) -- (4.20,0.90);
  \node[above left] at (2.15,0.82) {$V_1$};

  \draw (3.00,0.52) -- (7.40,0.52);
  \draw (3.00,0.44) -- (3.00,0.60);
  \draw (7.40,0.44) -- (7.40,0.60);
  \node[above left] at (5.50,0.52) {$V_2$};

  \draw (6.20,0.22) -- (10.50,0.22);
  \draw (6.20,0.14) -- (6.20,0.30);
  \draw (10.50,0.14) -- (10.50,0.30);
  \node[above left] at (9.225,0.22) {$V_3$};

  \draw[densely dashed] (4.20,0.92) -- (4.20,1.38);
  \draw[densely dashed] (7.40,0.62) -- (7.40,1.38);
\end{tikzpicture}
\caption{Coordinate construction, illustrated with three charts. The three lower $V_i$-tracks share a schematic $Y$-coordinate and overlap; the heavy segments on the $Y$-line indicate their disjointification $R_i=V_i\setminus\bigcup_{j<i}V_j$. Each $\theta_i:T_i\to R_i$ is a Borel bijection induced by the open bisection $B_i$ with $T_i\subseteq O_i\subseteq X$. The positions of the $T_i$-segments are purely schematic and do not assert that the Borel sets are intervals, or that the $X$-charts have common coordinates. The zeroth piece $R_0=T_0=X$ is omitted.}
\label{fig:coordinate-system}
\end{figure}

Set
\[
 \mathcal E_i=P_{T_i}\mathcal E,
 \qquad
 \widehat{\mathcal E}=\bigoplus_{i\geq0}^{p}\mathcal E_i.
\]
This countable $\ell^p$-sum is again a $\sigma$-finite $L^p$-space. For every Borel set $W\subseteq\cL^{(0)}=X\sqcup Y$, the set $\theta_i^{-1}(W\cap R_i)$ is automatically contained in $T_i$, so define
\[
 Q_W|_{\mathcal E_i}
 =P_{\theta_i^{-1}(W\cap R_i)}|_{\mathcal E_i}.
\]
For $W=X$, the zeroth block is the identity and every block $i>0$ vanishes because $R_i\subseteq Y$; hence $Q_X$ is the projection onto $\mathcal E_0$. For $W=Y$, the zeroth block vanishes and the $i>0$ blocks are the identities on $\mathcal E_i$. Therefore
\[
 Q_Y=I-Q_X.
\]

For $i>0$, if $d\in\BX$ and $b\in\BY$, define
\[
 m_i(b)=\Theta_{B_i}^{-1}(b|_{V_i})\in\Gamma_b(O_i,\cBX),
\]
and
\[
 \widehat\pi_i(d\oplus b)
 =\widetilde\pi_{O_i}(m_i(b))|_{\mathcal E_i},
 \qquad
 \widehat\pi_0(d\oplus b)=\pi(d).
\]
Let
\[
 \widehat\pi:\BX\oplus\BY\longrightarrow B(\widehat{\mathcal E})
\]
be their direct sum.

\begin{lem}\label{lem:pihat}
The map $\widehat\pi$ is a nondegenerate contractive representation of $\BX\oplus\BY$ on $\widehat{\mathcal E}$.
\end{lem}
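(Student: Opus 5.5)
The plan is to verify the three properties block by block, using that $\widehat\pi$ is a direct sum of the maps $\widehat\pi_i$. Each $\widehat\pi_i$ factors through one coordinate: $\widehat\pi_0$ depends only on $d$, and $\widehat\pi_i$ for $i>0$ depends only on $b$. So it suffices to show three things. First, each $\widehat\pi_i$ is a well-defined contractive algebra homomorphism from $\BX\oplus\BY$ into $B(\mathcal E_i)$. Second, these bounds are uniform in $i$, so that the direct sum is a bounded operator on the $\ell^p$-sum with norm $\sup_i\|\widehat\pi_i(d\oplus b)\|$. Third, each block is nondegenerate.

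For $i=0$ everything is the hypothesis on $\pi$, because $\mathcal E_0=P_X\mathcal E=\mathcal E$. For $i>0$ I first check that $m_i$ is a contractive algebra homomorphism $\BY\to\Gamma_b(O_i,\cBX)$.
\begin{itemize}
\item $b|_{V_i}$ is a bounded continuous section over $V_i=r(B_i)$.
\item $\Theta_{B_i}^{-1}=\Theta_{B_i^{-1}}$ transports sections along the open bisection $B_i^{-1}$ by the continuous action of the linking coefficient lemma.
\item That lemma says the action is by fiberwise isometric algebra isomorphisms, so $m_i$ is multiplicative and $\|m_i(b)\|_\infty\le\|b\|$.
\item Continuity of $m_i(b)$ follows from continuity of $\Theta$ and of $(s|_{B_i})^{-1}$.
\end{itemize}
Next, $b\mapsto\widetilde\pi_{O_i}(m_i(b))$ is a contractive homomorphism into $B(P_{O_i}\mathcal E)$. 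This holds because $\Gamma_b(O_i,\cBX)\to M(B_{X,O_i})$ is a contractive homomorphism and $\widetilde\pi_{O_i}$ is a contractive unital representation (Lemma~\ref{lem:local-multiplier}). Since $T_i\subseteq O_i$ and $\widetilde\pi_{O_i}(m)$ commutes with $P_{T_i}$, also by Lemma~\ref{lem:local-multiplier}, the compression to $\mathcal E_i=P_{T_i}\mathcal E$ is the restriction to an invariant subspace. Restriction therefore preserves multiplicativity and contractivity, and $\|\widehat\pi_i(d\oplus b)\|\le\|b\|$.

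Taken together, these bounds give
\[
\|\widehat\pi(d\oplus b)\|=\max\bigl(\|\pi(d)\|,\sup_{i>0}\|\widehat\pi_i(d\oplus b)\|\bigr)\le\max(\|d\|,\|b\|),
\]
which is the norm on $\BX\oplus\BY$. Multiplicativity holds blockwise, so $\widehat\pi$ is a contractive representation.

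Nondegeneracy is the step needing real care. Because $\widehat{\mathcal E}$ is an $\ell^p$-sum, the vectors supported in finitely many blocks are dense, so it suffices to show that each block $\mathcal E_i$ is nondegenerate for $\BY$. For this I would take the compactly supported approximate identity $(v_\lambda)$ of $\BY$ from Lemma~\ref{lem:induced-basic} and work in two steps.
\begin{itemize}
\item \emph{Approximate-identity step.} $m_i(v_\lambda)$ restricted to $B_{X,O_i}$ should act as an approximate identity in the strict sense. Take $d\in B_{X,O_i}$ compactly supported in $O_i$, transport it along $B_i$ to a compactly supported element of $B_{Y,V_i}$, and extend it by zero to an element of $\BY$. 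Then use that $v_\lambda$ multiplies it approximately identically in sup norm, and that $\Theta_{B_i}$ is isometric.
\item \emph{Strong-convergence step.} For $d$ compactly supported in $O_i$, this gives $\widetilde\pi_{O_i}(m_i(v_\lambda))\pi(d)\xi\to\pi(d)\xi$. Vectors of the form $\pi(d)\xi$ are dense in $P_{O_i}\mathcal E$ by Lemma~\ref{lem:local-multiplier} together with density of compactly supported sections, and the operators are uniformly bounded. Hence $\widetilde\pi_{O_i}(m_i(v_\lambda))\to I$ strongly on $P_{O_i}\mathcal E$.
\end{itemize}
Strong convergence then passes to the compression to $\mathcal E_i$, using again that these operators commute with $P_{T_i}$. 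So $\widehat\pi_i(0\oplus v_\lambda)\to I_{\mathcal E_i}$ strongly. Combining this with $\pi(u_\lambda)\to I$ on $\mathcal E_0$, and using $u_\lambda\oplus v_\lambda$ and an $\varepsilon/3$ argument on finitely supported vectors, gives nondegeneracy of $\widehat\pi$.
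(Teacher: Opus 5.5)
Your proposal is correct and follows essentially the paper's argument. Contractivity and multiplicativity come blockwise from Lemma~\ref{lem:local-multiplier} and the isometric transport $\Theta_{B_i}$. Nondegeneracy of each block $i>0$ comes from transporting $d\in\Gamma_c(O_i,\cBX)$ along $B_i$ to $b_d\in\BYc$ and using the density of the vectors $\pi(d)\xi$.

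The only difference is in how that transport is used. The paper notes $m_i(b_d)=d$, so $\widehat\pi_i(0\oplus b_d)\xi=\pi(d)\xi$ exactly and density of the range is immediate. You instead route the same transport through the approximate identity $(v_\lambda)$, which gives the equivalent, slightly more explicit conclusion that $\widehat\pi(u_\lambda\oplus v_\lambda)\to I$ strongly.
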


\begin{proof}
Multiplicativity and contractivity follow from Lemma~\ref{lem:local-multiplier} and the fact that the bundle maps $\Theta_\gamma$ are isometric algebra isomorphisms.

Fix $i>0$. Since
\[
 \overline{\pi(B_{X,O_i})\mathcal E}=P_{O_i}\mathcal E
\]
and the operators $\widetilde\pi_{O_i}(m)$ commute with $P_{T_i}$, vectors in $\mathcal E_i$ are approximated by finite sums of vectors $\pi(d)\xi$ with $d\in\Gamma_c(O_i,\cBX)$ and $\xi\in\mathcal E_i$. The transported section $\Theta_{B_i}(d)$ has compact support in $V_i$ and therefore extends by zero to an element $b_d\in\BYc$. By construction,
\[
 \widehat\pi_i(0\oplus b_d)\xi=\pi(d)\xi.
\]
Thus $\widehat\pi_i(\BY)\mathcal E_i$ is dense in $\mathcal E_i$. The zeroth block is nondegenerate because $\pi$ is.
\end{proof}

For the matrix construction we also need operators associated with Borel sub-bisections of the transformation groupoid $G\ltimes X$. Let $\mathscr R\subseteq G\ltimes X$ be a Borel bisection contained in a relatively compact open bisection, and put
\[
 S_g=\{x\in X:(g,x)\in\mathscr R\}.
\]
Since $G$ is discrete,
\[
 \mathscr R=\bigsqcup_{g\in F}\{g\}\times S_g,
 \qquad
 F=\{g\in G:S_g\neq\varnothing\},
\]
and every $S_g$ is Borel. Moreover, $F$ is finite because the projection to the discrete group $G$ sends the compact closure of a relatively compact open bisection containing $\mathscr R$ to a compact, hence finite, subset of $G$. Define
\[
 W_{\mathscr R}=\sum_{g\in F}U_gP_{S_g}.
\]

\begin{lem}\label{lem:borel-bisection-operator}
The operator $W_{\mathscr R}$ is a contraction which restricts to a surjective isometry
\[
 P_{s(\mathscr R)}\mathcal E\longrightarrow P_{r(\mathscr R)}\mathcal E,
\]
vanishes on $(I-P_{s(\mathscr R)})\mathcal E$, and has contractive generalized inverse $W_{\mathscr R^{-1}}$. If $\mathscr R$ and $\mathscr S$ are two such Borel bisections, then $W_{\mathscr R} W_{\mathscr S}=W_{\mathscr R\mathscr S}$.
\end{lem}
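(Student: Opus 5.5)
The plan is to reduce everything to the band covariance relation \eqref{eq:band-cov}, the multiplicativity $P_SP_T=P_{S\cap T}$, and the additivity \eqref{eq:band-pythagoras}.

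\emph{Disjointness of the pieces.} Since $\mathscr R$ is a Borel bisection, the sources $S_g$ ($g\in F$) are pairwise disjoint. The ranges $gS_g$ are also pairwise disjoint. Moreover $s(\mathscr R)=\bigsqcup_g S_g$ and $r(\mathscr R)=\bigsqcup_g gS_g$. By \eqref{eq:band-cov},
\[
 U_gP_{S_g}=P_{gS_g}U_g .
\]
Hence $W_{\mathscr R}$ maps the band $P_{S_g}\mathcal E$ isometrically onto $P_{gS_g}\mathcal E$, because $U_g$ is a surjective isometry intertwining these projections. It also annihilates $(I-P_{s(\mathscr R)})\mathcal E$, since $P_{S_g}P_{s(\mathscr R)}=P_{S_g}$.

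\emph{Norm and isometry estimates.} For $\xi\in\mathcal E$, the vectors $U_gP_{S_g}\xi$ lie in the disjoint bands $P_{gS_g}\mathcal E$. Two applications of \eqref{eq:band-pythagoras} give
\[
 \|W_{\mathscr R}\xi\|^p=\sum_g\|P_{S_g}\xi\|^p=\|P_{s(\mathscr R)}\xi\|^p\le\|\xi\|^p .
\]
This yields contractivity and shows that $W_{\mathscr R}$ is isometric on $P_{s(\mathscr R)}\mathcal E$. For surjectivity onto $P_{r(\mathscr R)}\mathcal E$, decompose a target vector $\eta$ as $\sum_g P_{gS_g}\eta$ and take the preimage $\sum_g U_g^{-1}P_{gS_g}\eta=\sum_g P_{S_g}U_{g^{-1}}\eta$. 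This preimage is precisely $W_{\mathscr R^{-1}}\eta$, because $\mathscr R^{-1}=\bigsqcup_g\{g^{-1}\}\times gS_g$. Note that $\mathscr R^{-1}$ is again a Borel bisection contained in the inverse of the relatively compact open bisection, so it is admissible.

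\emph{Product rule.} I will expand
\[
 W_{\mathscr R}W_{\mathscr S}=\sum_{g,k}U_gP_{S_g}U_kP_{S'_k}=\sum_{g,k}U_{gk}P_{k^{-1}S_g\cap S'_k},
\]
using \eqref{eq:band-cov} in the form $P_{S_g}U_k=U_kP_{k^{-1}S_g}$. Grouping by $\ell=gk$, the sets $k^{-1}S_g\cap S'_k$ with $gk=\ell$ are exactly the pieces of $(\mathscr R\mathscr S)_\ell$. They are pairwise disjoint because $\mathscr S$ is a bisection, so their projections sum to $P_{(\mathscr R\mathscr S)_\ell}$. Finite additivity of $T\mapsto P_T$ then gives $W_{\mathscr R\mathscr S}$. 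One should also note that $\mathscr R\mathscr S$ lies in the product of two relatively compact open bisections, which is again a relatively compact open bisection in the étale groupoid, so $W_{\mathscr R\mathscr S}$ is defined.

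\emph{Generalized inverse.} Applying the product rule,
\[
 W_{\mathscr R}W_{\mathscr R^{-1}}W_{\mathscr R}=W_{\mathscr R\mathscr R^{-1}\mathscr R}=W_{\mathscr R},
\]
since $\mathscr R\mathscr R^{-1}\mathscr R=\mathscr R$ for a bisection; symmetrically $W_{\mathscr R^{-1}}W_{\mathscr R}W_{\mathscr R^{-1}}=W_{\mathscr R^{-1}}$. I expect the main care point to be the bookkeeping in the product rule, namely verifying disjointness of the regrouped pieces and that the composite set is again admissible. The analytic content is just covariance plus band additivity.
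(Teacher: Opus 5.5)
Your proposal is correct and follows the paper's proof essentially step for step. You use disjointness of the source and range pieces together with \eqref{eq:band-pythagoras} for the norm identity, $W_{\mathscr R^{-1}}$ for surjectivity, and the expansion $U_gP_{S_g}U_kP_{S'_k}=U_{gk}P_{k^{-1}S_g\cap S'_k}$ via \eqref{eq:band-cov} for the product rule; your only minor variations are deriving the generalized-inverse identities from the product rule and $\mathscr R\mathscr R^{-1}\mathscr R=\mathscr R$, and checking explicitly that $\mathscr R^{-1}$ and $\mathscr R\mathscr S$ are admissible, which the paper leaves implicit.
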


\begin{proof}
Because $\mathscr R$ is a bisection, the source sets $S_g$ are pairwise disjoint and so are the range sets $gS_g$. Thus the vectors $U_gP_{S_g}\xi$ lie in the ranges of disjoint Borel projections, and \eqref{eq:band-pythagoras} applies. Consequently
\[
 \|W_{\mathscr R}\xi\|_p^p
 =\sum_{g\in F}\|U_gP_{S_g}\xi\|_p^p
 =\sum_{g\in F}\|P_{S_g}\xi\|_p^p
 =\|P_{s(\mathscr R)}\xi\|_p^p.
\]
The second equality uses that each $U_g$ is an isometry, and the last uses that the $S_g$ partition $s(\mathscr R)$. Applying the same calculation to $\mathscr R^{-1}$ gives the surjectivity statement and shows that $W_{\mathscr R^{-1}}$ is a contractive generalized inverse.

If
\[
 \mathscr S=\bigsqcup_{h\in F'}\{h\}\times T_h,
\]
then \eqref{eq:band-cov} gives
\[
 U_gP_{S_g}U_hP_{T_h}
 =U_{gh}P_{T_h\cap h^{-1}S_g}.
\]
These are exactly the source pieces of $\mathscr R\mathscr S$, and their disjointness yields $W_{\mathscr R} W_{\mathscr S}=W_{\mathscr R\mathscr S}$.
\end{proof}

Let $C\subseteq\cL$ be a relatively compact open bisection. For $i,j\geq0$, set
\[
 D_{ij}(C)
 =(B_i^{-1}CB_j)\cap r^{-1}(T_i)\cap s^{-1}(T_j).
\]
This cutdown is equivalently
\[
 D_{ij}(C)=\widetilde B_i^{-1}C\widetilde B_j.
\]
Indeed, if $\gamma=b_i^{-1}cb_j\in B_i^{-1}CB_j$, then
\[
 r(\gamma)=s(b_i),\qquad s(\gamma)=s(b_j).
\]
Since $s|_{B_i}$ and $s|_{B_j}$ are injective,
\[
 r(\gamma)\in T_i\iff b_i\in\widetilde B_i,
 \qquad
 s(\gamma)\in T_j\iff b_j\in\widetilde B_j.
\]
Thus the two descriptions coincide.

For any open or Borel bisection $A$, write $h_A=r\circ(s|_A)^{-1}$ for the induced partial bijection from $s(A)$ onto $r(A)$. It is a homeomorphism when $A$ is open, but only a Borel bijection is asserted for the cutdown $D_{ij}(C)$. The geometry of the $(i,j)$-entry is summarized by the commutative diagram
\[
\begin{tikzcd}[column sep=large,row sep=large]
 s(D_{ij}(C))
   \arrow[r,"h_{D_{ij}(C)}"]
   \arrow[d,"\theta_j"']
 & r(D_{ij}(C))
   \arrow[d,"\theta_i"]
 \\
 s(C)\cap R_j\cap h_C^{-1}(R_i)
   \arrow[r,"h_C"']
 & r(C)\cap R_i\cap h_C(s(C)\cap R_j).
\end{tikzcd}
\]
Indeed, writing an arrow of $D_{ij}(C)$ as $\widetilde b_i^{-1}c\widetilde b_j$ shows directly that its source and range are pulled back through $\theta_j$ and $\theta_i$, respectively.

Only finitely many $D_{ij}(C)$ are nonempty. For $i,j>0$, nonemptiness implies that $r(C)$ meets $R_i\subseteq V_i$ and $s(C)$ meets $R_j\subseteq V_j$; the compact closures of $r(C)$ and $s(C)$ meet only finitely many members of the locally finite family $(V_i)_{i\geq1}$. If $j=0<i$, nonemptiness of $D_{i0}(C)$ forces $r(C)$ to meet $R_i\subseteq V_i$, while if $i=0<j$, nonemptiness of $D_{0j}(C)$ forces $s(C)$ to meet $R_j\subseteq V_j$. Local finiteness therefore gives only finitely many nonzero entries in the exceptional row and column as well; $D_{00}(C)$ contributes at most one further entry.

By Lemma~\ref{lem:borel-bisection-operator},
\[
 W_{ij}(C)=W_{D_{ij}(C)}:\mathcal E_j\longrightarrow\mathcal E_i
\]
is a contraction with generalized inverse $W_{D_{ij}(C)^{-1}}$. Define
\[
 \widehat V_C=(W_{ij}(C))_{i,j}\in B(\widehat{\mathcal E}).
\]

\begin{lem}[Coefficient spatial matrix dilation]
For every relatively compact open bisection $C\subseteq\cL$, the operator $\widehat V_C$ is a contraction which restricts to a surjective isometry
\[
 Q_{s(C)}\widehat{\mathcal E}\longrightarrow Q_{r(C)}\widehat{\mathcal E}
\]
and has generalized inverse $\widehat V_{C^{-1}}$. Moreover,
\begin{equation}\label{eq:Vmult}
 \widehat V_C\widehat V_D=\widehat V_{CD}
\end{equation}
for relatively compact open bisections $C,D$, and
\begin{equation}\label{eq:covariance}
 \widehat V_C\widehat\pi(a)
 =\widehat\pi(\Theta_C(a))\widehat V_C
\end{equation}
for every $a\in\Gamma_0(s(C),\cB)$. If $C'\subseteq C$ is an open sub-bisection, then
\begin{equation}\label{eq:restriction}
 \widehat V_{C'}=Q_{r(C')}\widehat V_C=\widehat V_CQ_{s(C')}.
\end{equation}
\end{lem}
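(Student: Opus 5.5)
The plan is to reduce every assertion about $\widehat V_C$ to entrywise assertions about the Borel bisection operators $W_{D_{ij}(C)}$ and then apply Lemma~\ref{lem:borel-bisection-operator}.

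\textbf{Each entry falls under that lemma.} The set $B_i^{-1}CB_j$ is a product of relatively compact open bisections of $\cL$, so it is a relatively compact open bisection. After cutting down, its range and source lie in $T_i,T_j\subseteq X$. Hence $D_{ij}(C)$ is a Borel bisection of the reduction $\cL|_X=\cG=G\ltimes X$. It is contained in the relatively compact open bisection $(B_i^{-1}CB_j)\cap r^{-1}(X)\cap s^{-1}(X)$, so Lemma~\ref{lem:borel-bisection-operator} applies to it. I will also use two identities:
\[
D_{ij}(C)^{-1}=D_{ji}(C^{-1}),
\]
and the fact that $W$ is additive over disjoint unions of such Borel bisections. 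Additivity holds because pieces with the same group element $g$ combine via $P_S+P_{S'}=P_{S\sqcup S'}$.

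\textbf{Isometry and generalized inverse.} I use the commutative diagram preceding the statement.
\begin{itemize}
\item In a fixed column $j$, the sources $s(D_{ij}(C))=\theta_j^{-1}\bigl(s(C)\cap R_j\cap h_C^{-1}(R_i)\bigr)$ are pairwise disjoint in $i$. Their union is $\theta_j^{-1}(s(C)\cap R_j)$, which is exactly the $j$-th block of $Q_{s(C)}$.
\item In a fixed row $i$, the ranges are pairwise disjoint in $j$, since $h_C$ is injective and the $R_j$ are disjoint.
\end{itemize}
Applying \eqref{eq:band-pythagoras} twice, once along rows and once along columns, together with the isometry part of Lemma~\ref{lem:borel-bisection-operator}, gives
\[
\|\widehat V_C\xi\|_p^p=\sum_j\|Q_{s(C)}\xi_j\|_p^p.
\]
Since only finitely many entries are nonzero, all sums are finite. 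The same argument for $C^{-1}$, combined with $D_{ji}(C^{-1})=D_{ij}(C)^{-1}$, yields surjectivity onto $Q_{r(C)}\widehat{\mathcal E}$ and shows that $\widehat V_{C^{-1}}$ is a generalized inverse.

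\textbf{Multiplicativity \eqref{eq:Vmult}.} By Lemma~\ref{lem:borel-bisection-operator},
\[
(\widehat V_C\widehat V_D)_{ik}=\sum_j W_{D_{ij}(C)D_{jk}(D)}.
\]
I will show that $D_{ik}(CD)$ is the disjoint union over $j$ of the sets $D_{ij}(C)D_{jk}(D)$; additivity of $W$ then finishes the proof. Take an arrow $\widetilde b_i^{-1}cd\,\widetilde b_k$ of $D_{ik}(CD)$. The middle unit $s(c)=r(d)$ lies in exactly one $R_j$, and there is a unique $\widetilde b_j\in\widetilde B_j$ with range $s(c)$. Inserting $\widetilde b_j\widetilde b_j^{-1}$ factors the arrow uniquely through $T_j$, which gives both the covering and the disjointness.

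\textbf{Restriction \eqref{eq:restriction}.} We have
\[
D_{ij}(C')=D_{ij}(C)\cap r^{-1}\bigl(\theta_i^{-1}(r(C'))\bigr)=D_{ij}(C)\cap s^{-1}\bigl(\theta_j^{-1}(s(C'))\bigr).
\]
Using \eqref{eq:band-cov}, a range cutdown of $W_{\mathscr R}$ equals multiplication by $P$ on the left, and a source cutdown equals multiplication by $P$ on the right. This gives both equalities in \eqref{eq:restriction}.

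\textbf{Covariance \eqref{eq:covariance}, the main obstacle.} Entrywise the claim is
\[
W_{ij}(C)\,\widehat\pi_j(a)=\widehat\pi_i(\Theta_Ca)\,W_{ij}(C).
\]
Decompose $D_{ij}(C)=\bigsqcup_g\{g\}\times S_g$. For $j>0$, $\widehat\pi_j(a)$ is the local multiplier operator $\widetilde\pi_{O_j}(m_j(a))$. For $j=0$ it is $\pi(a)$, which I regard as the multiplier $\widetilde\pi_X(a)$.

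The first step is to extend covariance to local multipliers:
\[
U_g\,\widetilde\pi_O(m)\,U_g^{-1}=\widetilde\pi_{gO}(\sigma_g m).
\]
Conjugation by $U_g$ carries the nondegenerate representation of $B_{X,O}$ on $P_O\mathcal E$ to that of $B_{X,gO}$ on $P_{gO}\mathcal E$. This uses \eqref{eq:band-cov} and the covariance of $(\pi,U)$, and uniqueness of multiplier extensions then gives the identity.

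Next, since local multipliers commute with Borel bands (Lemma~\ref{lem:local-multiplier}), each term is
\[
U_gP_{S_g}\,\widetilde\pi_{O_j}(m_j(a))=P_{gS_g}\,\widetilde\pi_{gO_j}\bigl(\sigma_g m_j(a)\bigr)\,U_gP_{S_g}.
\]
For $x\in S_g$, write the arrow as $(g,x)=\widetilde b_i^{-1}c\,\widetilde b_j$. Multiplicativity of $\Theta$ then shows that
\[
\sigma_g m_j(a)\quad\text{and}\quad m_i(\Theta_Ca)
\]
agree at $gx$. Both are continuous sections on open sets containing the Borel set $gS_g$, so Lemma~\ref{lem:borel-localization} lets me replace one by the other after $P_{gS_g}$. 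Summing over $g$ gives the entry identity.

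The delicate bookkeeping is in the exceptional row and column $i=0$ or $j=0$, where $\pi$ itself replaces a transported multiplier. I also need the chosen open domains to contain the relevant Borel sets: $gS_g\subseteq O_i\cap gO_j$ (with $O_0:=X$), and $\Theta_Ca$ must be defined on $r(C)\cap V_i$.
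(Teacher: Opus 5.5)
Your proposal is correct and follows essentially the same route as the paper. The shared steps are:
\begin{itemize}
\item entrywise reduction to Lemma~\ref{lem:borel-bisection-operator}, with row/column disjointness of the bands read off from the diagram;
\item the decomposition $D_{ik}(CD)=\bigsqcup_j D_{ij}(C)D_{jk}(D)$, obtained by inserting the unit $\widetilde b_j\widetilde b_j^{-1}$;
\item band cutdowns for \eqref{eq:restriction};
\item covariance via $U_g\widetilde\pi_O(m)U_g^{-1}=\widetilde\pi_{gO}(m^g)$, followed by Lemma~\ref{lem:borel-localization} on $gS_g\subseteq O_i\cap gO_j$.
\end{itemize}
You also make explicit several points the paper leaves tacit: that each $D_{ij}(C)$ lies in a relatively compact open bisection of $G\ltimes X$, additivity of $W$ over disjoint pieces, and $D_{ij}(C)^{-1}=D_{ji}(C^{-1})$. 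The only slip is calling $B_0=X$ relatively compact. This is harmless, since for $i=0$ or $j=0$ the product $B_i^{-1}CB_j$ is contained in $CB_j$ or $B_i^{-1}C$, respectively.
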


\begin{proof}
For fixed $j$, the initial bands of the nonzero $W_{ij}(C)$ are pairwise disjoint as $i$ varies: otherwise two arrows of the bisection $C$ would have the same source but ranges in two distinct sets $R_i$. Similarly, for fixed $i$, the final bands are pairwise disjoint as $j$ varies. Hence Lemma~\ref{lem:borel-bisection-operator} shows that the finite matrix $\widehat V_C$ is a contraction and is isometric on the direct sum of its initial bands.

For fixed $j$,
\[
 \bigsqcup_i s(D_{ij}(C))
 =\theta_j^{-1}(s(C)\cap R_j).
\]
Indeed, let $u$ belong to the set on the right. There is a unique $\widetilde b_j\in\widetilde B_j$ with $s(\widetilde b_j)=u$. Since $r(\widetilde b_j)=\theta_j(u)\in s(C)$ and $C$ is a bisection, there is a unique $c\in C$ with source $\theta_j(u)$. Its range belongs to a unique $R_i$, and then there is a unique $\widetilde b_i\in\widetilde B_i$ with $r(\widetilde b_i)=r(c)$. Hence $\widetilde b_i^{-1}c\widetilde b_j\in D_{ij}(C)$ has source $u$. The converse is immediate from the definition, and uniqueness of $i$ proves disjointness. Therefore the corresponding band projections sum to
\[
 P_{\theta_j^{-1}(s(C)\cap R_j)},
\]
which is exactly the $j$th block of $Q_{s(C)}$. The symmetric argument gives the blocks of $Q_{r(C)}$. Thus the initial and final projections of $\widehat V_C$ are $Q_{s(C)}$ and $Q_{r(C)}$, respectively, and its generalized inverse is $\widehat V_{C^{-1}}$.

For multiplication one has the disjoint decomposition
\[
 D_{ik}(CD)=\bigsqcup_jD_{ij}(C)D_{jk}(D).
\]
To see the nontrivial inclusion, take $\eta\in D_{ik}(CD)$ and write
\[
 \eta=\widetilde b_i^{-1}cd\widetilde b_k
\]
with composable $c\in C$ and $d\in D$. The common unit $x=s(c)=r(d)$ lies in a unique $R_j$. Let $\widetilde b_j\in\widetilde B_j$ be the unique arrow with $r(\widetilde b_j)=x$. Inserting the unit $x=\widetilde b_j\widetilde b_j^{-1}$ gives
\[
 \eta=(\widetilde b_i^{-1}c\widetilde b_j)
       (\widetilde b_j^{-1}d\widetilde b_k)
 \in D_{ij}(C)D_{jk}(D).
\]
The reverse inclusion is immediate, and uniqueness of $R_j$ proves that the union is disjoint. Lemma~\ref{lem:borel-bisection-operator} then gives \eqref{eq:Vmult} block by block.

For covariance, take $\gamma\in D_{ij}(C)$ and write
\[
 \gamma=b_i^{-1}cb_j,
 \qquad
 b_i\in\widetilde B_i,\ c\in C,\ b_j\in\widetilde B_j.
\]
Since $\Theta$ is a groupoid action,
\[
 \Theta_\gamma=\Theta_{b_i}^{-1}\Theta_c\Theta_{b_j}.
\]
Fix a group slice
\[
 D_{ij}(C)\cap(\{g\}\times X)=\{g\}\times S_g.
\]
If $m\in\Gamma_b(O,\cBX)$, set
\[
 m^g(x)=\Theta_{(g,g^{-1}x)}\bigl(m(g^{-1}x)\bigr)
 \qquad(x\in gO).
\]

By \eqref{eq:band-cov}, $U_g$ maps
$P_O\mathcal E$ isometrically onto
$P_{gO}\mathcal E$. Moreover,
\[
 \sigma_g(md)=m^g\sigma_g(d)
 \qquad(d\in B_{X,O}).
\]
Thus conjugation by $U_g$ transports
the multiplier defined by $m$ on
$B_{X,O}$ to the multiplier defined
by $m^g$ on $B_{X,gO}$. By uniqueness
of the nondegenerate multiplier
extension,
\[
 U_g\widetilde\pi_O(m)U_g^{-1}
 =\widetilde\pi_{gO}(m^g)
\]
on $P_{gO}\mathcal E$.
It also holds for the zero-extended operators on $\mathcal E$, since $U_gP_OU_g^{-1}=P_{gO}$.

Now fix the source slice $S_g$ of $D_{ij}(C)$. Regard $a$ and $\Theta_C(a)$ as sections on $X\sqcup Y$ by extension by zero. For $j>0$ and $i>0$, respectively, put
\[
 m_j=\Theta_{B_j}^{-1}(a|_{V_j}),
 \qquad
 n_i=\Theta_{B_i}^{-1}\bigl(\Theta_C(a)|_{V_i}\bigr).
\]
For the zeroth coordinate, set $O_0=X$, $m_0=a|_X$, and $n_0=\Theta_C(a)|_X$. Thus $m_j$ is defined on $O_j$ and $n_i$ on $O_i$. For $x\in S_g$, the groupoid-action identity gives
\[
 n_i(gx)=\Theta_{(g,x)}(m_j(x)).
\]
Equivalently, $n_i=m_j^g$ on the Borel set $gS_g\subseteq O_i\cap gO_j$. Lemma~\ref{lem:borel-localization} therefore gives
\[
 P_{gS_g}\widetilde\pi_{O_i}(n_i)
 =P_{gS_g}\widetilde\pi_{gO_j}(m_j^g).
\]
Using local multiplier covariance and $U_gP_{S_g}=P_{gS_g}U_g$, we obtain
\[
 U_gP_{S_g}\widetilde\pi_{O_j}(m_j)
 =\widetilde\pi_{O_i}(n_i)U_gP_{S_g}.
\]
Summing over the finitely many source slices and restricting to $\mathcal E_j$ and $\mathcal E_i$ gives on the $(i,j)$ block
\[
 W_{ij}(C)\,\widehat\pi_j(a)
 =\widehat\pi_i(\Theta_C(a))\,W_{ij}(C).
\]
Summing the finitely many blocks proves \eqref{eq:covariance}. The restriction formula \eqref{eq:restriction} follows directly from the entries: replacing $C$ by $C'$ restricts every $D_{ij}(C)$ to the corresponding source and range bands.
\end{proof}
\begin{rem}
For $p\neq2$, each $\widehat V_C$ is a spatial partial isometry:
it has contractive generalized inverse $\widehat V_{C^{-1}}$, and
their products are the spatial idempotents $Q_{s(C)}$ and
$Q_{r(C)}$ (multiplication-band projections); see
\cite[Proposition~6.5]{GardellaSurvey}. For $p=2$, it is a
Hilbert-space partial isometry with
$\widehat V_{C^{-1}}=\widehat V_C^*$, but need not be spatial
relative to the chosen $L^2$ realization, since its support
projections may be spectral rather than multiplication projections.
\end{rem}

\subsection{The integrated linking representation}

For $f\in\Gamma_c(\cL,r^*\cB)$ supported in a relatively compact open bisection $C$, define
\[
 a_f^C\in\Gamma_c(r(C),\cB),
 \qquad
 a_f^C(r(\gamma))=f(\gamma).
\]
Its support is compactly contained in $r(C)$, so we regard it, by extension by zero, as an element of $\BX\oplus\BY$. Set
\begin{equation}\label{eq:integrated}
 \widehat\Psi(f)=\widehat\pi(a_f^C)\widehat V_C.
\end{equation}
This is independent of the chosen bisection containing $\supp(f)$. Indeed, suppose $C'\subseteq C$ and $\supp(f)\subseteq C'$. For $x\in r(C')$, the unique arrow of $C$ with range $x$ is already the unique arrow of $C'$, so $a_f^C(x)=a_f^{C'}(x)$. If $x\in r(C)\setminus r(C')$, the unique $\gamma\in C$ with $r(\gamma)=x$ lies outside $\supp(f)$, and hence $a_f^C(x)=f(\gamma)=0$. Thus $a_f^C$ is exactly the extension by zero of $a_f^{C'}$, and
\[
 \widehat\pi(a_f^C)=\widehat\pi(a_f^C)Q_{r(C')}.
\]
Together with \eqref{eq:restriction}, this gives
\[
 \widehat\pi(a_f^C)\widehat V_C
 =\widehat\pi(a_f^C)Q_{r(C')}\widehat V_C
 =\widehat\pi(a_f^{C'})\widehat V_{C'}.
\]
Passing to common bisection refinements therefore gives a well-defined linear map on $\Gamma_c(\cL,r^*\cB)$.

It is multiplicative. If $f$ and $k$ are supported in bisections $C$ and $D$, respectively, and $\gamma=cd\in CD$ with $c\in C$ and $d\in D$, then
\[
 (f*k)(cd)=f(c)\,\Theta_c(k(d)).
\]
Equivalently, if $x=r(c)$,
\[
 a_{f*k}^{CD}(x)
 =a_f^C(x)\,\Theta_c\bigl(a_k^D(s(c))\bigr).
\]
To apply covariance, localize the second coefficient to $s(C)$. Choose $\chi\in C_c(s(C))$ with $\chi=1$ on the compact set $s(\supp f)$, and put
\[
 m=\chi\,a_k^D|_{s(C)}\in\Gamma_0(s(C),\cB).
\]
Since $a_f^C$ is supported in $r(\supp f)$,
\[
 \widehat\pi(a_f^C)=\widehat\pi(a_f^C)Q_{r(\supp f)}.
\]
At the level of the $D_{ij}(C)$ entries, restricting to the Borel sub-bisection $\supp(f)\subseteq C$ amounts to cutting down by its source or range band, exactly as in the proof of \eqref{eq:restriction}. In particular,
\[
 Q_{r(\supp f)}\widehat V_C
 =\widehat V_CQ_{s(\supp f)}.
\]
No coefficient section is transported along this Borel sub-bisection; the coefficient transport continues to take place along the open bisection $C$.
Moreover, $m$ agrees with $a_k^D$ on $s(\supp f)$. Because the band projections commute with coefficient multiplication, these identities justify replacing $a_k^D$ by $m$ inside the product. Hence
\begin{align*}
 \widehat\Psi(f)\widehat\Psi(k)
 &=\widehat\pi(a_f^C)\widehat V_C
   \widehat\pi(m)\widehat V_D\\
 &=\widehat\pi(a_f^C)\widehat\pi(\Theta_C(m))
   \widehat V_C\widehat V_D\\
 &=\widehat\pi\bigl(a_f^C\,\Theta_C(m)\bigr)
   \widehat V_{CD}\\
 &=\widehat\pi(a_{f*k}^{CD})\widehat V_{CD}
 =\widehat\Psi(f*k).
\end{align*}
Here the second equality is covariance, the third uses multiplicativity of $\widehat\pi$ and \eqref{eq:Vmult}, and the fourth is the preceding pointwise convolution formula.

Thus
\[
 \widehat\Psi:\Gamma_c(\cL,r^*\cB)\longrightarrow B(\widehat{\mathcal E})
\]
is an algebra homomorphism. For a bisection-supported section,
\begin{equation}\label{eq:bisection-bound}
 \|\widehat\Psi(f)\|\leq\|f\|_\infty.
\end{equation}

\begin{prop}[Exact corner recovery]
Let $f_G\in C_c(G,\BXc)$, viewed as a section of $\Gamma_c(\cG,r^*\cBX)$ through \eqref{eq:groupoid-crossed-identification}. Then
\[
 \widehat\Psi(f_G)=(\pi\rtimes U)(f_G)\oplus0
\]
with respect to $\widehat{\mathcal E}=\mathcal E_0\oplus_p\bigoplus_{i>0}\mathcal E_i$.
\end{prop}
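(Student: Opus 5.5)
By linearity of $\widehat\Psi$ and of $\pi\rtimes U$, it suffices to treat a single term $f_G=d\,\delta_g$ with $d\in\BXc$ and $g\in G$. Under \eqref{eq:groupoid-crossed-identification}, the section $\widetilde f$ is supported in the open bisection $C_g=\{g\}\times X$ of $\cG\subseteq\cL$. This bisection need not be relatively compact, so I will first cut it down. The support of $\widetilde f$ is $\{g\}\times g^{-1}\supp(d)$, which is compact. Choose a relatively compact open set $O\subseteq X$ containing $g^{-1}\supp(d)$ and set $C=\{g\}\times O$; this is a relatively compact open bisection containing $\supp\widetilde f$. Then $r(C)=gO$, and for $x\in gO$,
\[
a_{\widetilde f}^C(x)=\widetilde f(g,g^{-1}x)=d(x).
\]
Thus $a_{\widetilde f}^C$ is $d\oplus 0\in\BX\oplus\BY$, after extension by zero.

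Next I compute $\widehat V_C$ block by block. If $i>0$ or $j>0$, then $D_{ij}(C)=\widetilde B_i^{-1}C\widetilde B_j$ is empty. The reason is that $C\subseteq\cG$ has range and source in $X$, while for $i>0$ the bisection $\widetilde B_i$ has range in $R_i\subseteq Y$, so nothing composes. Hence only the $(0,0)$ block survives. Since $\widetilde B_0=X$, we have $D_{00}(C)=C=\{g\}\times O$, and Lemma~\ref{lem:borel-bisection-operator} gives
\[
W_{00}(C)=U_gP_O\quad\text{on }\mathcal E_0=\mathcal E.
\]
Consequently $\widehat V_C=U_gP_O\oplus 0$. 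Similarly, $\widehat\pi(d\oplus 0)$ equals $\pi(d)$ on $\mathcal E_0$. On the blocks $i>0$ it equals $\widetilde\pi_{O_i}(m_i(0))=0$, since the $Y$-component of $d\oplus0$ is zero. Therefore
\[
\widehat\Psi(\widetilde f)=\pi(d)U_gP_O\oplus 0.
\]

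It remains to remove the projection, that is, to show $\pi(d)U_gP_O=\pi(d)U_g$. By \eqref{eq:band-cov}, $U_gP_O=P_{gO}U_g$. Since $\supp d\subseteq gO$, we have $\pi(d)=\pi(d)P_{gO}$; this holds because $d=\chi d$ for some $\chi\in C_c(gO)$ equal to $1$ on $\supp d$, so $\pi(d)=\pi(d)\rho(\chi)$ and $\rho(\chi)=\rho(\chi)P_{gO}$ (alternatively, apply Lemma~\ref{lem:local-multiplier}). Hence
\[
\widehat\Psi(\widetilde f)=\pi(d)U_g\oplus0=(\pi\rtimes U)(d\,\delta_g)\oplus 0.
\]
Summing over $g$ completes the argument.

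I expect the main care to be needed at two points. The first is the reduction from the non-relatively-compact bisection $\{g\}\times X$ to $C=\{g\}\times O$, where one uses that $\widehat\Psi$ is independent of the chosen bisection. The second is the identification $\pi(d)P_{gO}=\pi(d)$ via the central $C_0(X)$-action. Both steps are routine given the results already established.
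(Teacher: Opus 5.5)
Your proposal is correct and follows essentially the paper's own argument: only the $(0,0)$ entry $D_{00}(C)=C$ survives, $W_{00}(C)$ is $U_gP_{S_g}$ on each group slice, and the band projection is absorbed using $U_gP_{S_g}=P_{gS_g}U_g$ together with the support of the coefficient. The only difference is that you reduce to single slices $d\,\delta_g$ with $C=\{g\}\times O$, so that $a^C_{\widetilde f}=d$ exactly, whereas the paper treats a relatively compact open bisection with several group slices at once and then sums.
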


\begin{proof}
It suffices to consider $f_G$ supported in a relatively compact open bisection $C\subseteq\cG$. Every arrow of $C$ has source and range in $X=R_0$, so
\[
 D_{00}(C)=C,
 \qquad
 D_{ij}(C)=\varnothing\quad((i,j)\neq(0,0)).
\]
Write
\[
 C=\bigsqcup_{g\in F}\{g\}\times S_g,
\]
where $F\subseteq G$ is finite and the $S_g$ are open subsets of $X$. Then
\[
 W_C=\sum_{g\in F}U_gP_{S_g},
 \qquad
 \widehat V_C=W_C\oplus0.
\]
Moreover, since $a_{f_G}^C$ is supported over $X$, its representation $\widehat\pi(a_{f_G}^C)$ acts as $\pi(a_{f_G}^C)$ on the zeroth block and vanishes on the other blocks.

For each $g\in F$, the identification \eqref{eq:groupoid-crossed-identification}
gives
\[
 a_{f_G}^C|_{gS_g}
   =f_G(g)|_{gS_g}.
\]
Furthermore, $f_G(g)$ vanishes outside $gS_g$, and Borel covariance gives
\[
 U_gP_{S_g}=P_{gS_g}U_g.
\]
Consequently,
\[
\begin{aligned}
 \pi(a_{f_G}^C)W_C
 &=\sum_{g\in F}
   \pi(a_{f_G}^C)U_gP_{S_g}\\
 &=\sum_{g\in F}
   \pi(f_G(g))U_gP_{S_g}\\
 &=\sum_{g\in F}
   \pi(f_G(g))U_g\\
 &=(\pi\rtimes U)(f_G).
\end{aligned}
\]
Equation~\eqref{eq:integrated} therefore yields
\[
 \widehat\Psi(f_G)
 =(\pi\rtimes U)(f_G)\oplus0.
\]
The general case follows by a finite bisection decomposition.
\end{proof}

The same construction, with $X$ and $Y$ interchanged, dilates every nondegenerate contractive covariant representation of $(H,\BY,\tau)$.

\section{The linking norm and the diagonal corners}

Let $\mathcal R_X$ denote the family of homomorphisms $\widehat\Psi$ obtained by dilating all nondegenerate contractive covariant $L^p$-representations of $(G,\BX,\sigma)$, and let $\mathcal R_Y$ be the symmetric family obtained from $(H,\BY,\tau)$. For $f\in\Gamma_c(\cL,r^*\cB)$ define
\begin{equation}\label{eq:link-norm}
 \|f\|_{\cL,p}
 =\max\left\{
 \sup_{\Psi\in\mathcal R_X}\|\Psi(f)\|,
 \sup_{\Phi\in\mathcal R_Y}\|\Phi(f)\|
 \right\}.
\end{equation}
This is a submultiplicative seminorm. Let $\mathfrak L^p(\cL,\cB)$ be the Hausdorff completion after quotienting by its null space.

If $K\subseteq\cL$ is compact, choose finitely many relatively compact open bisections $C_1,\ldots,C_N$ covering $K$ and a subordinate partition of unity. The estimate \eqref{eq:bisection-bound} gives
\begin{equation}\label{eq:IL-cont}
 \|f\|_{\cL,p}\leq N\|f\|_\infty
 \qquad(\supp(f)\subseteq K).
\end{equation}
Thus uniform convergence on a fixed compact support implies convergence in the linking norm.

\begin{lem}\label{lem:opposite-corner}
Let $\Psi\in\mathcal R_X\cup\mathcal R_Y$. The restriction of $\Psi$ to either diagonal corner, compressed to the corresponding range $Q_X\widehat{\mathcal E}$ or $Q_Y\widehat{\mathcal E}$, is the integrated form of a nondegenerate contractive covariant $L^p$-representation of the corresponding dynamical system.
\end{lem}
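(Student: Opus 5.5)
By the symmetry $X\leftrightarrow Y$ it suffices to treat $\Psi=\widehat\Psi\in\mathcal R_X$, dilated from $(\pi,U)$ on $\mathcal E$. The corner $X$ is essentially settled by exact corner recovery: the compression to $Q_X\widehat{\mathcal E}=\mathcal E_0$ returns $\pi\rtimes U$. The real content is the $Y$-corner. There I would build a covariant pair $(\pi_Y,U^Y)$ for $(H,\BY,\tau)$ on $Q_Y\widehat{\mathcal E}=\bigoplus_{i>0}^p\mathcal E_i$.

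For the coefficient representation, set $\pi_Y(b)=\widehat\pi(0\oplus b)|_{Q_Y\widehat{\mathcal E}}$. Lemma~\ref{lem:pihat} makes it contractive, multiplicative and nondegenerate on each $\mathcal E_i$ with $i>0$, hence on the $\ell^p$-sum.

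For the unitary part, I would use that each $\{h\}\times Y\subseteq\cH$ is an open bisection. It is not relatively compact, so I would write $Y=\bigcup_n K_n$ as an increasing union of relatively compact open sets. The operators $\widehat V_{\{h\}\times K_n}$ are compatible by \eqref{eq:restriction}: $\widehat V_{\{h\}\times K_n}=\widehat V_{\{h\}\times K_m}Q_{K_n}$ for $n\le m$. Because $Q_{K_n}\to Q_Y$ strongly by countable additivity \eqref{eq:band-pythagoras}, they converge strongly to an operator $U^Y_h$ on $Q_Y\widehat{\mathcal E}$. The dilation lemma gives isometries $Q_{K_n}\widehat{\mathcal E}\to Q_{hK_n}\widehat{\mathcal E}$, so $U^Y_h$ is a surjective isometry of $Q_Y\widehat{\mathcal E}$ with inverse $U^Y_{h^{-1}}$. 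The multiplicativity $U^Y_{h}U^Y_{k}=U^Y_{hk}$ follows from \eqref{eq:Vmult}, since $(\{h\}\times hK)(\{k\}\times K)\supseteq\{hk\}\times K$ after trimming, and then passing to the limit. Covariance $U^Y_h\pi_Y(b)(U^Y_h)^{-1}=\pi_Y(\tau_h(b))$ comes from \eqref{eq:covariance}. Here I would first take $b$ compactly supported in some $K_n$, noting that under the identification $\BY\cong\Gamma_0(Y,\cBY)$ the map $\Theta_{\{h\}\times Y}$ is $\tau_h$, and then extend by density.

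It remains to identify the compressed restriction of $\Psi$ with the integrated form. Take $f\in C_c(H,\BYc)$ supported in an open bisection $C=\bigsqcup_h\{h\}\times S_h$ with $S_h$ relatively compact. Then \eqref{eq:restriction} gives $\widehat V_C=\sum_hU^Y_hQ_{S_h}$ on $Q_Y\widehat{\mathcal E}$. The proof of exact corner recovery then applies verbatim to give $\Psi(f)|_{Q_Y}=\sum_h\pi_Y(f(h))U^Y_h$, and linearity handles general $f$. This compression is not merely a cutdown of an arbitrary operator: $Q_Y$ commutes with $\Psi$ of diagonal-corner elements, because such $\Psi(f)$ is supported in $Q_Y(\cdot)Q_Y$. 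So the compression is a homomorphism.

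The main obstacle is the $h$-operator step: making sense of $\widehat V$ for non-relatively-compact bisections. This needs the strong-limit argument together with finiteness of blocks on each $K_n$. The surjectivity and invertibility claims should follow from the isometry statements of the dilation lemma, but the bookkeeping of the $Q$-projections under strong limits is where care is required.
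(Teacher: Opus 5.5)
Your proposal is correct, but it takes a different route from the paper. The paper never constructs the opposite-corner unitaries geometrically. It treats both corners, and both families $\mathcal R_X,\mathcal R_Y$, uniformly:
\begin{itemize}
 \item the compressed corner map $\Psi_X$ is a homomorphism, and \eqref{eq:bisection-bound} applied to group slices gives $\|\Psi_X(f_G)\|\leq\|f_G\|_1$;
 \item hence $\Psi_X$ extends to a nondegenerate contractive representation of $\ell^1(G,\BX,\sigma)$;
 \item the covariant pair is read off by applying the multiplier extension to the canonical multipliers $i_X(d)$ and $v_g$;
 \item $U_g^\Psi$ is a surjective isometry because $v_g$ and $v_g^{-1}$ both have norm one, and covariance comes from the multiplier relations.
\end{itemize}

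You instead handle the original corner by exact corner recovery. On the opposite corner you realize $U^Y_h$ explicitly as the strong limit of the partial isometries $\widehat V_{\{h\}\times K_n}$. You then verify the group law, covariance (via \eqref{eq:covariance} and $\Theta_{\{h\}\times Y}=\tau_h$), and the integrated-form identity directly from the dilation lemma.

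The paper's argument is shorter. It uses only that the compression is an $\ell^1$-contractive homomorphism with nondegenerate coefficient part, so it never has to extend $\widehat V$ to non-relatively-compact bisections. Yours costs the limiting bookkeeping, but it gives an explicit band-by-band description of the opposite-corner unitaries. That is the same kind of information the paper must extract anyway in the reduced case, where it shows $J\widehat V_CJ^{-1}=\mathsf P_U\lambda_H(k)$.

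The identity your bookkeeping rests on is
\[
 U^Y_hQ_W=\widehat V_{\{h\}\times W}
\]
for every relatively compact open $W\subseteq Y$. It holds because $\overline W$ lies in some $K_n$, so \eqref{eq:restriction} applies. With it, the group law follows from
\[
 (\{h\}\times kK)(\{k\}\times K)=\{hk\}\times K.
\]
Note that the first factor must have source $kK$, not $hK$ as you wrote. The slice decomposition $\widehat V_C=\sum_hU^Y_hQ_{S_h}$ then follows from \eqref{eq:restriction} and the disjointness of the source sets $S_h$.
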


\begin{proof}
We treat the $G$-corner. Let
\[
 \Psi_X(f)=Q_X\Psi(f)|_{Q_X\widehat{\mathcal E}}
 \qquad(f\in C_c(G,\BXc))
\]
denote the compressed corner representation. By Lemma~\ref{lem:pihat}, or its symmetric version when $\Psi\in\mathcal R_Y$, the induced representation of $\BX$ on $Q_X\widehat{\mathcal E}$ is nondegenerate. If
\[
 f_G=\sum_{g\in F}d_g\delta_g\in C_c(G,\BXc),
\]
then \eqref{eq:bisection-bound} and a decomposition into the finitely many group slices give
\[
 \|\Psi_X(f_G)\|\leq\sum_{g\in F}\|d_g\|=\|f_G\|_1.
\]
Hence $\Psi_X$ extends contractively to the Banach algebra $\ell^1(G,\BX,\sigma)$ and remains nondegenerate.

The contractive approximate identity of $\BX$ gives one for $\ell^1(G,\BX,\sigma)$, so the corner representation extends uniquely and contractively to its multiplier algebra. Let
\[
 i_X:\BX\longrightarrow M(\ell^1(G,\BX,\sigma))
\]
be the canonical coefficient map, and for $g\in G$ let $v_g$ be the invertible multiplier
\[
 (v_gf)(t)=\sigma_g(f(g^{-1}t)),
 \qquad
 (fv_g)(t)=f(tg^{-1}).
\]
Both $v_g$ and $v_{g^{-1}}=v_g^{-1}$ have norm one. Put
\[
 \pi_\Psi(d)=\overline{\Psi_X}(i_X(d)),
 \qquad
 U_g^\Psi=\overline{\Psi_X}(v_g).
\]
Then $U_g^\Psi U_{g^{-1}}^\Psi=U_{g^{-1}}^\Psi U_g^\Psi=I$, while contractivity gives
\[
 \|U_g^\Psi\|\leq1,
 \qquad
 \|U_{g^{-1}}^\Psi\|\leq1.
\]
Thus $U_g^\Psi$ and its inverse are contractions, so $U_g^\Psi$ is a surjective isometry. The multiplier relations also give
\[
 U_g^\Psi\pi_\Psi(d)(U_g^\Psi)^{-1}
 =\pi_\Psi(\sigma_g(d)).
\]
Moreover $\pi_\Psi$ is nondegenerate. Indeed, for any contractive approximate identity $(u_\lambda)$ of $\BX$,
\[
 \pi_\Psi(u_\lambda)=\Psi_X(u_\lambda\delta_{e_G})
 \longrightarrow I
\]
strongly. Hence $(\pi_\Psi,U^\Psi)$ is a nondegenerate contractive covariant representation. For $d\in\BX$ and $g\in G$,
\[
 \pi_\Psi(d)U_g^\Psi
 =\overline{\Psi_X}(i_X(d))\,\overline{\Psi_X}(v_g)
 =\overline{\Psi_X}(i_X(d)v_g)
 =\Psi_X(d\delta_g),
\]
because $i_X(d)v_g=d\delta_g$ in the canonical multiplier realization of $\ell^1(G,\BX,\sigma)$. By linearity the integrated form $\pi_\Psi\rtimes U^\Psi$ agrees with $\Psi_X$ on $C_c(G,\BXc)$ and therefore on $C_c(G,\BX)$ by density. The $H$-corner is symmetric.
\end{proof}

\begin{prop}[Diagonal norm equality]\label{prop:diagonal-norm}
For $f_G\in C_c(G,\BXc)$ and $f_H\in C_c(H,\BYc)$,
\[
 \|f_G\|_{\cL,p}=\|f_G\|_{F^p(G,\BX,\sigma)},
 \qquad
 \|f_H\|_{\cL,p}=\|f_H\|_{F^p(H,\BY,\tau)}.
\]
Consequently the closures of the two diagonal corners in $\mathfrak L^p(\cL,\cB)$ are isometrically $F^p(G,\BX,\sigma)$ and $F^p(H,\BY,\tau)$.
\end{prop}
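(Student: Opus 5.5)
We prove the $G$-corner identity by establishing two inequalities; the $H$-corner identity follows by interchanging the roles of $X$ and $Y$ throughout.

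\emph{Lower bound: $\|f_G\|_{\cL,p}\geq\|f_G\|_{F^p(G,\BX,\sigma)}$.} Let $(\pi,U)$ be any nondegenerate contractive covariant representation of $(G,\BX,\sigma)$ on a $\sigma$-finite $L^p$-space. Dilate it to $\widehat\Psi\in\mathcal R_X$. By the exact corner recovery proposition,
\[
 \widehat\Psi(f_G)=(\pi\rtimes U)(f_G)\oplus0,
 \qquad\text{so}\qquad
 \|(\pi\rtimes U)(f_G)\|=\|\widehat\Psi(f_G)\|\leq\|f_G\|_{\cL,p}.
\]
Taking the supremum over all such $(\pi,U)$ gives the inequality.

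\emph{Upper bound: $\|f_G\|_{\cL,p}\leq\|f_G\|_{F^p(G,\BX,\sigma)}$.} Fix $\Psi\in\mathcal R_X\cup\mathcal R_Y$. First we show that $\Psi(f_G)$ lives entirely in the $X$-corner, namely $\Psi(f_G)=Q_X\Psi(f_G)Q_X$. For $f_G$ supported in a bisection $C\subseteq\cG$ we have $r(C),s(C)\subseteq X$. Then:
\begin{itemize}
 \item[(i)] \eqref{eq:restriction}, or directly the block structure of $\widehat V_C$ with $Q_{s(C)}\leq Q_X$ and $Q_{r(C)}\leq Q_X$, gives $\widehat V_C=Q_X\widehat V_CQ_X$;
 \item[(ii)] $\widehat\pi(a^C_{f_G})$ commutes with $Q_X$ and vanishes on $Q_Y\widehat{\mathcal E}$, because $a^C_{f_G}$ is a section over $X$ and the blocks of $\widehat\pi$ on an element of $\BX\oplus0$ are supported where the $X$-coordinate acts.
\end{itemize}
The second point needs a moment's care in the symmetric dilation (from $\mathcal R_Y$), where the $\BX$-part acts through transported local multipliers on the blocks indexed by $X$. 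Even there, each block of $\widehat\pi$ depends only on the summand of $\BX\oplus\BY$ sitting over that block's $R_i$. Summing over a finite bisection decomposition, $\Psi(f_G)$ equals its compression $\Psi_X(f_G)$ extended by zero, so $\|\Psi(f_G)\|=\|\Psi_X(f_G)\|$. By Lemma~\ref{lem:opposite-corner}, $\Psi_X=\pi_\Psi\rtimes U^\Psi$ for a nondegenerate contractive covariant representation on $Q_X\widehat{\mathcal E}$. This space is a $\sigma$-finite $L^p$-space, being a countable $\ell^p$-sum of Borel bands. Hence $\|\Psi(f_G)\|\leq\|f_G\|_{F^p}$, and taking the supremum over $\Psi$ gives the bound.

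\emph{Consequence.} The two inequalities give isometry of the norms on the dense subalgebra $C_c(G,\BXc)$. By the density remark at the end of Section~2, this subalgebra has completion $F^p(G,\BX,\sigma)$. Its closure in $\mathfrak L^p(\cL,\cB)$ is therefore isometrically $F^p(G,\BX,\sigma)$; in particular the null space of $\|\cdot\|_{\cL,p}$ meets this corner trivially.

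The main obstacle is the corner-support claim $\Psi(f_G)=Q_X\Psi(f_G)Q_X$ for the dilations coming from the other side. Everything else is bookkeeping plus the two cited results.
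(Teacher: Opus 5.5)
Your proof is correct and follows the paper's argument: exact corner recovery gives the lower bound, Lemma~\ref{lem:opposite-corner} (compression to the $X$-corner is the integrated form of an admissible covariant representation) gives the upper bound, and density handles the completions. Your explicit check that $\Psi(f_G)=Q_X\Psi(f_G)Q_X$, including for dilations in $\mathcal R_Y$, justifies $\|\Psi(f_G)\|=\|\Psi_X(f_G)\|$; the paper leaves this step implicit when it says the restriction to the $G$-corner is admissible.
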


\begin{proof}
Exact corner recovery gives
\[
 \|f_G\|_{\cL,p}\geq\|f_G\|_{F^p(G,\BX,\sigma)}.
\]
Conversely, Lemma~\ref{lem:opposite-corner} shows that every representation occurring in \eqref{eq:link-norm}, when restricted to the $G$-corner, is an admissible covariant representation for the intrinsic full crossed product norm. Hence the reverse inequality holds. The $H$-corner is symmetric, and the assertion about the completions follows from the density established after Lemma~\ref{lem:induced-basic}.
\end{proof}

\section{Fullness and symmetric imprimitivity}

We now prove fullness and nondegeneracy of the off-diagonal corners. The argument is a coefficient version of the pairing-valued approximate-identity construction in \cite{CDW}.

\begin{lem}\label{lem:pairing-cai}
There is a contractive approximate identity $(u_\lambda)$ for $\BX$, with $u_\lambda\in\BXc$, such that each $u_\lambda\delta_{e_G}\in C_c(G,\BXc)$ is a finite sum
\begin{equation}\label{eq:pairing-cai}
 u_\lambda\delta_{e_G}
 =\sum_{k=1}^{n_\lambda}\xi_{\lambda,k}*\eta_{\lambda,k},
\end{equation}
where
\[
 \xi_{\lambda,k}\in\Gamma_c(Z,r^*\cB),
 \qquad
 \eta_{\lambda,k}\in\Gamma_c(Z^{\op},r^*\cB).
\]
A symmetric statement holds for $\BY$.
\end{lem}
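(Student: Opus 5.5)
We reuse the construction of the compactly supported contractive approximate identity from the proof of Lemma~\ref{lem:induced-basic}, arranging each summand to be supported in the range of a simultaneous slice. The plan is to refine the choice of trivializing neighbourhoods so that every $U_j$ equals $q_H(S_j)$ for a simultaneous slice $S_j\subseteq Z$ from Lemma~\ref{lem:simultaneous-slices}. Such sets $q_H(S)$ form a base for $X$, since $q_H$ is open and slices can be shrunk. Over $q_H(S_j)$ the bundle $\cBX$ is trivialized by the unique representative in $S_j$. We then take $u=\sum_j\varphi_j a_j^{(j)}$ exactly as before; the approximate-identity and contractivity estimates carry over verbatim.

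Next, each summand $\varphi_ja_j^{(j)}\delta_{e_G}$ will be written as a product $\xi*\eta$. The set $S_j$, viewed inside $Z\subseteq\cL$, is an open bisection with $r(S_j)=q_H(S_j)$ and $s(S_j)=q_G(S_j)$, and $S_j^{\op}$ is its inverse. For $\gamma\in\cG\subseteq\cL$ in the $(X,X)$-corner, the convolution \eqref{eq:conv} gives
\[
 (\xi*\eta)(\gamma)=\sum_{z\in Z,\ r(z)=r(\gamma)}\xi(z)\,\Theta_z\bigl(\eta(z^{-1}\gamma)\bigr).
\]
If $\xi$ is supported in $S_j$ and $\eta$ in $S_j^{\op}$, only $z\in S_j$ contributes. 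Moreover $z^{-1}\gamma\in S_j^{\op}$ forces $\gamma=zz^{\op}=r(z)$, which is a unit. Hence $\xi*\eta$ is supported in the unit space $X$, i.e. it is of the form $d\delta_{e_G}$. Its value at $x=q_H(z)$, $z\in S_j$, is $\xi(z)\Theta_z(\eta(z^{\op}))$.

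To realize $\varphi_ja_j^{(j)}$, we factor the scalar part as $\varphi_j=\sqrt{\varphi_j}\cdot\sqrt{\varphi_j}$, which stays compactly supported in $q_H(S_j)$. Using the slice coordinates we set
\[
 \xi(z)=\sqrt{\varphi_j(q_H z)}\,[z,a_j]_H\in(\cBX)_{r(z)},
 \qquad
 \eta(z^{\op})=\sqrt{\varphi_j(q_H z)}\,[z,1]_G\quad(z\in S_j).
\]
The second formula needs a unit, so instead we split $a_j$ itself. By Cohen factorization in $A$ (which has a contractive approximate identity), or more simply by choosing $a_j=e'e''$ as products of approximate-identity elements, we have $a_j=a'_ja''_j$ with $\|a'_j\|,\|a''_j\|\le1$. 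Alternatively we keep the factor $a_j$ on the $\xi$ side and use $e''$ on the $\eta$ side; then $a_je''$ is still close to acting as the identity on the relevant compact set. We then put $\eta(z^{\op})=\sqrt{\varphi_j}\,[z,a''_j]_G$. Since $\Theta_z$ is the identity in slice coordinates, the product at $q_H(z)$ is $\varphi_j(q_Hz)[z,a'_ja''_j]_H$. Both sections are continuous with compact support in $S_j$ and $S_j^{\op}$ respectively, by the continuity of the slice trivializations. The cleanest route is to redefine $a_j:=a'_ja''_j$ from the outset, taking $a'_j,a''_j$ from the approximate identity of $A$ far enough along that the product still approximates the identity within $\varepsilon$ on the relevant compact set. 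The earlier estimates then remain valid, and summing over $j$ gives \eqref{eq:pairing-cai}.

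The main obstacle is this factorization of the coefficient $a_j$ while keeping $\|u\|\le1$ and the approximation property. A product of two elements from a contractive approximate identity is contractive, and by a standard $\varepsilon/3$ argument it approximates the identity uniformly on compact sets. The rest is bookkeeping: we check that the mixed-product convention of $\cL$ gives $zz^{\op}=q_H(z)$, and that only the single arrow $z$ contributes to the sum. The symmetric statement for $\BY$ follows by the same construction with $\eta*\xi$-type products over $Z^{\op}$ followed by $Z$, using the $G$-slice coordinates.
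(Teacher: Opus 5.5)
Your proposal is correct and follows essentially the same route as the paper. You choose the trivializing sets as $q_H(S_j)$ for simultaneous slices, split $\varphi_j=\sqrt{\varphi_j}\sqrt{\varphi_j}$, factor the coefficient $a_j$, put $\xi$ on $S_j$ and $\eta$ on $S_j^{\op}$, and use the bisection property to see that $\xi*\eta$ lives on the units with value $\varphi_j a_j$. The only variation is that your preferred option takes $a_j$ to be a product of two approximate-identity elements from the start, whereas the paper applies Cohen--Hewitt factorization to the given $a_j$. Both work, and the factor norms do not matter since only $\|u\|\leq1$ is needed; note that Cohen--Hewitt guarantees only one factor of norm at most $1$, not both.
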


\begin{proof}
Use the approximate identity constructed in Lemma~\ref{lem:induced-basic}. The images under $q_H$ of simultaneous slices form a basis for $X$: given $zH\in O\subseteq X$, apply Lemma~\ref{lem:simultaneous-slices} inside the open set $q_H^{-1}(O)$. Hence the finitely many trivializing neighborhoods used in each stage of that construction may be chosen as
\[
 U_j=r(S_j)=q_H(S_j),
\]
where $S_j\subseteq Z$ is a simultaneous slice, hence an open bisection of $\cL$. The corresponding approximate-identity element has the form
\[
 u=\sum_{j=1}^m\varphi_j a_j^{(j)},
\]
with $\varphi_j\in C_c(U_j)$ nonnegative and $a_j\in A$.

Because $A$ has a bounded approximate identity, it is a nondegenerate left $A$-module. Cohen--Hewitt factorization therefore gives $a_j=b_jc_j$ for some $b_j,c_j\in A$. Define local sections
\[
 r_j=\sqrt{\varphi_j}\,b_j^{(j)},
 \qquad
 s_j=\sqrt{\varphi_j}\,c_j^{(j)}.
\]
Their supports are contained in the compact set $\supp(\varphi_j)\subseteq U_j$, so extension by zero gives $r_j,s_j\in\BXc$, and
\[
 \varphi_j a_j^{(j)}=r_js_j.
\]

Define $\xi_j$ on $S_j\subseteq Z$ by
\[
 \xi_j(z)=r_j(r(z)).
\]
For $z\in S_j$, the value $s_j(r(z))$ lies in $(\cBX)_{r(z)}$ and $\Theta_z:(\cBY)_{s(z)}\to(\cBX)_{r(z)}$ is an isometric algebra isomorphism. Hence we may define, unambiguously,
\[
 \eta_j(z^{\op})=\Theta_{z^{\op}}(s_j(r(z)))
 =\Theta_z^{-1}(s_j(r(z))).
\]

Both sections are continuous because
the bundle action is continuous on
the open bisection $S_j$, and they
have compact support because
$r|_{S_j}$ is a homeomorphism and
$r_j,s_j$ are compactly supported
in $U_j$. Extend $\xi_j$ and
$\eta_j$ by zero to $Z$ and
$Z^{\op}$, respectively.

Since $S_j$ is a bisection,
\[
 \supp(\xi_j*\eta_j)
 \subseteq S_jS_j^{-1}
 =r(S_j)\subseteq X.
\]
For $x\in U_j$,
there is a unique $z\in S_j$ with
$r(z)=x$, and \eqref{eq:conv} gives
\[
 (\xi_j*\eta_j)(x)=r_j(x)s_j(x).
\]
Consequently,
\[
 \xi_j*\eta_j
 =(\varphi_j a_j^{(j)})\delta_{e_G}.
\]
Summing over $j$ proves
\eqref{eq:pairing-cai}. The construction
for $\BY$ is symmetric, using
bisections in $Z^{\op}$.
\end{proof}

Let
\begin{align*}
 X_p&=\overline{\Gamma_c(Z^{\op},r^*\cB)}^{\,\|\cdot\|_{\cL,p}},\\
 Y_p&=\overline{\Gamma_c(Z,r^*\cB)}^{\,\|\cdot\|_{\cL,p}}.
\end{align*}
At the algebraic level,
\[
 Z^{\op}Z\subseteq\cH,
 \qquad
 ZZ^{\op}\subseteq\cG.
\]
Thus convolution sends
\[
 \Gamma_c(Z^{\op},r^*\cB)\times\Gamma_c(Z,r^*\cB)
 \longrightarrow \Gamma_c(\cH,r^*\cBY)
\]
and, in the reverse order, into $\Gamma_c(\cG,r^*\cBX)$. Proposition~\ref{prop:diagonal-norm} identifies the closures of these diagonal algebras with the corresponding crossed products. Since multiplication in $\mathfrak L^p(\cL,\cB)$ is continuous, it therefore gives bimodule actions and pairings
\[
 \langle\cdot,\cdot\rangle_{F^p(H,\BY,\tau)}:
 X_p\times Y_p\longrightarrow F^p(H,\BY,\tau),
\]
and
\[
 {}_{F^p(G,\BX,\sigma)}\!\langle\cdot,\cdot\rangle:
 Y_p\times X_p\longrightarrow F^p(G,\BX,\sigma).
\]
Submultiplicativity gives the pairing and module norm estimates, while associativity of convolution gives the correspondence identities and the two Morita compatibility identities. Thus $(X_p,Y_p)$ is a Banach correspondence from $F^p(G,\BX,\sigma)$ to $F^p(H,\BY,\tau)$ in the convention above, and $(Y_p,X_p)$ is the opposite correspondence.

\begin{prop}\label{prop:full-nondegenerate}
These two Banach correspondences are full and nondegenerate.
\end{prop}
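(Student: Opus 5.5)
The plan is to use Lemma~\ref{lem:pairing-cai} to build, inside each diagonal corner, an approximate identity consisting of pairing values. Nondegeneracy will follow because each element of that approximate identity is itself a product $\xi*\eta$. Fullness will follow because the pairing values then absorb a dense subalgebra. We treat the $G$-side; the $H$-side is the symmetric statement of Lemma~\ref{lem:pairing-cai}.

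\emph{Step 1: an approximate identity for the corner.} Let $(u_\lambda)\subseteq\BXc$ be the contractive approximate identity of Lemma~\ref{lem:pairing-cai}, and put $e_\lambda=u_\lambda\delta_{e_G}$. For $f=d\delta_g\in C_c(G,\BXc)$ we have $e_\lambda*f=(u_\lambda d)\delta_g$ and $f*e_\lambda=(d\,\sigma_g(u_\lambda))\delta_g$. Both converge to $f$ in $\|\cdot\|_1$: the first because $(u_\lambda)$ is an approximate identity, the second because $\sigma_g$ is an isometric automorphism, so $\sigma_g(u_\lambda)$ is again an approximate identity. Since the $F^p$ norm is dominated by $\|\cdot\|_1$, and Proposition~\ref{prop:diagonal-norm} identifies $\|\cdot\|_{\cL,p}$ with the $F^p$ norm on the corner, these limits also hold in $\mathfrak L^p(\cL,\cB)$. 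Moreover $\|e_\lambda\|_{\cL,p}\le\|u_\lambda\|\le1$. By density and this uniform bound, $(e_\lambda)$ is a contractive two-sided approximate identity for $F^p(G,\BX,\sigma)$.

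\emph{Step 2: fullness.} By \eqref{eq:pairing-cai}, each $e_\lambda$ lies in the span of the pairing values ${}_{F^p(G,\BX,\sigma)}\langle Y_p,X_p\rangle$. Fix $f\in C_c(G,\BXc)$. Then $\xi_{\lambda,k}*\eta_{\lambda,k}*f=\xi_{\lambda,k}*(\eta_{\lambda,k}*f)$, and $\eta_{\lambda,k}*f\in\Gamma_c(Z^{\op},r^*\cB)\subseteq X_p$ because $Z^{\op}\cG\subseteq Z^{\op}$. Hence $e_\lambda*f$ is a pairing value, and it converges to $f$. Density of $C_c(G,\BXc)$ then gives that the closed span of the pairing values is all of $F^p(G,\BX,\sigma)$.

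\emph{Step 3: nondegeneracy.} For $\xi\in\Gamma_c(Z,r^*\cB)$ we need $e_\lambda*\xi\to\xi$ in $\|\cdot\|_{\cL,p}$. The plan is to compute pointwise. For $z$ in a fixed simultaneous slice $S$, $(e_\lambda*\xi)(z)=u_\lambda(r(z))\,\xi(z)$, where $\xi(z)\in(\cBX)_{r(z)}$. So $e_\lambda*\xi=(u_\lambda\circ r)\,\xi$, supported in the fixed compact set $\supp\xi$. Uniform convergence on that compact set follows from the construction in Lemma~\ref{lem:induced-basic}: the approximate identity was chosen so that left multiplication approximates any finite set of sections, and the argument extends to a compact family of fiber values $\{\xi(z)\}$ by compactness, after trivializing over finitely many slices covering $\supp\xi$. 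Then \eqref{eq:IL-cont} upgrades uniform convergence with fixed compact support to convergence in $\|\cdot\|_{\cL,p}$. The right action of the $H$-corner is handled symmetrically, and $X_p$ is treated the same way. Density of the compactly supported sections and the uniform bound $\|e_\lambda\|\le1$ then give $\overline{F^p(G,\BX,\sigma)\,Y_p}=Y_p$, and likewise for the other three module actions.

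\emph{Main obstacle.} The delicate point is the uniform convergence in Step 3. The approximate identity in Lemma~\ref{lem:induced-basic} was indexed by finite subsets of $\BX$, not by compact sets of fiber values. To handle this, I would first extend $\xi$ restricted to each slice to a section of $\BXc$ via $r$, and then include these finitely many sections in the finite set $F$ indexing $\lambda$. Uniform convergence on $\supp\xi$ then follows from the pointwise-sup estimate already proved there.
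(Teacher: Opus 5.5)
Your proposal is correct and follows essentially the same route as the paper. Fullness comes from $(u_\lambda\delta_{e_G})*f_G=\sum_k\xi_{\lambda,k}*(\eta_{\lambda,k}*f_G)$ together with $\ell^1$-convergence, and nondegeneracy comes from the formula $(u_\lambda\delta_{e_G})*\xi=(u_\lambda\circ r)\,\xi$ on slice-supported pieces followed by \eqref{eq:IL-cont}. The ``obstacle'' you flag is not a real one: split $\xi$ with a partition of unity into pieces supported in slices $S$, and transport each to some $d_\xi\in\BXc$ via $r|_S$, as the paper does. The required uniform estimate is then just $\|u_\lambda d_\xi-d_\xi\|_{\BX}\to0$, because the norm of $\BX$ is the supremum norm, so no re-indexing of the net is needed.
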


\begin{proof}
Let $(u_\lambda)$ be the approximate identity from Lemma~\ref{lem:pairing-cai}. If
\[
 f_G=\sum_{g\in F}d_g\delta_g\in C_c(G,\BXc),
\]
then
\[
 ((u_\lambda\delta_{e_G})*f_G)(g)=u_\lambda d_g,
\]
so
\[
 \|(u_\lambda\delta_{e_G})*f_G-f_G\|_1
 =\sum_{g\in F}\|u_\lambda d_g-d_g\|\longrightarrow0.
\]
Hence the convergence also holds in the full crossed product norm. By \eqref{eq:pairing-cai} and associativity,
\[
 (u_\lambda\delta_{e_G})*f_G
 =\sum_k\xi_{\lambda,k}*(\eta_{\lambda,k}*f_G),
\]
so the linear span of the $G$-valued pairing is dense in $F^p(G,\BX,\sigma)$. The $H$-valued pairing is full by symmetry.

For module nondegeneracy, first let $\xi\in\Gamma_c(Z,r^*\cB)$ be supported in an open bisection $S\subseteq Z$. Since $r|_S:S\to r(S)$ is a homeomorphism, define
\[
 d_\xi(x)=\xi((r|_S)^{-1}(x))\qquad(x\in r(S))
\]
and extend by zero outside $r(S)$. Because $\supp(\xi)$ is compactly contained in $S$, this extension belongs to $\BXc$. For $z\in S$,
\[
 ((u_\lambda\delta_{e_G})*\xi)(z)
 =u_\lambda(r(z))\xi(z),
\]
so
\[
 \sup_{z\in S}
 \|((u_\lambda\delta_{e_G})*\xi)(z)-\xi(z)\|
 =\|u_\lambda d_\xi-d_\xi\|_{\BX}\longrightarrow0.
\]
Moreover the difference is supported in the fixed compact set $\supp(\xi)$. A general compactly supported off-diagonal section is a finite sum of sections supported in open bisections, so the same conclusion holds on a fixed compact support. Estimate \eqref{eq:IL-cont} then gives convergence in the $Y_p$-norm. Thus the left $F^p(G,\BX,\sigma)$-action on $Y_p$ is nondegenerate. The other three module actions are handled in the same way, using the two-sided approximate identities and symmetry.
\end{proof}

\begin{proof}[Proof of the full assertion in Theorem A]
By Proposition~\ref{prop:diagonal-norm},
the diagonal corners are the stated
full crossed products. The off-diagonal
correspondences satisfy the pairing
estimates and Morita compatibility
identities by submultiplicativity
and associativity. They are full
and nondegenerate by
Proposition~\ref{prop:full-nondegenerate},
and hence implement a Morita equivalence.
\end{proof}

\section{Reduced symmetric imprimitivity}

\subsection{Fiber reindexing}

We now restrict the linking construction to regular covariant representations and show that regularity is preserved on the opposite diagonal corner.

Let $\pi_0:\BX\to B(E_0)$ be a nondegenerate contractive representation on a $\sigma$-finite $L^p$-space, and write
\[
 \operatorname{Reg}_G(\pi_0)=\pi_0^{\mathrm{reg}}\rtimes\lambda_G
\]
for the regular representation from \eqref{eq:regular-conventions}. Let
\[
 \rho_{\pi_0}:C_0(X)\longrightarrow B(E_0)
\]
be the central representation obtained from $\pi_0$ as in \eqref{eq:rho}. Using the bounded-Borel extension of the central representation as in Subsection~3.1 (with spectral projections on the original Hilbert space when $p=2$), write $P_S^{(0)}$ for the projection associated with a Borel set $S\subseteq X$. The central representation associated with $\pi_0^{\mathrm{reg}}$ then satisfies
\begin{equation}\label{eq:regular-band}
 (P_S\xi)(g)=P_{g^{-1}S}^{(0)}\xi(g)
 \qquad(g\in G).
\end{equation}
Indeed, if $M_\varphi$ denotes the central multiplier of $\BX$ associated with $\varphi\in C_0(X)$, then
\[
 \sigma_{g^{-1}}(M_\varphi d)
 =M_{\varphi_g}\,\sigma_{g^{-1}}(d),
 \qquad
 \varphi_g(x)=\varphi(gx).
\]
Consequently the central $C_0(X)$-representation associated with $\pi_0^{\mathrm{reg}}$ acts on the $g$-coordinate by
\[
 \xi(g)\longmapsto\rho_{\pi_0}(\varphi_g)\xi(g).
\]
Passing from $\varphi$ to $1_S$ gives $1_S(gx)=1_{g^{-1}S}(x)$ and hence the displayed formula.

Apply the matrix dilation of Section~3 to $\operatorname{Reg}_G(\pi_0)$. We retain the coordinate data
\[
 (B_i,O_i,R_i,T_i,\theta_i)_{i\geq1}
\]
from Subsection~3.2. Throughout Section~6, $\widehat{\mathcal E}$, $Q_W$, $\widehat\pi$, and $\widehat V_C$ denote the dilated space, Borel band projections, coefficient representation, and bisection operators constructed in Subsections~3.2--3.3 from this regular representation. In particular, $\widehat\pi$ is the direct-sum representation of $\BX\oplus\BY$ from Lemma~\ref{lem:pihat}.

For $i>0$, let
\[
 \omega_i:O_i\longrightarrow B_i,
 \qquad
 \omega_i(u)=(s|_{B_i})^{-1}(u),
\]
and put
\[
 z_i(u)=\bigl(\omega_i(u)\bigr)^{-1}\in Z.
\]
For $\nu=(g,i)\in G\times\mathbb N_{>0}$ set
\[
 \Delta_\nu=g^{-1}T_i,
 \qquad
 O_\nu=g^{-1}O_i,
\]
and define
\[
 z_\nu^{\mathrm{loc}}(x)=g^{-1}z_i(gx)\quad(x\in O_\nu),
 \qquad
 z_\nu=z_\nu^{\mathrm{loc}}|_{\Delta_\nu}.
\]
Here the superscript $\mathrm{loc}$ indicates the continuous local coordinate map on the open set $O_\nu$; the undecorated $z_\nu$ denotes its restriction to the generally only Borel set $\Delta_\nu$. The notation $g^{-1}z$ refers to the given left action of $G$ on $Z$.

For each $x\in X$, the fiber $q_H^{-1}(x)$ is a single free right $H$-orbit.

\begin{lem}[Fiber reindexing]\label{lem:fiber-reindexing}
For every $x\in X$, the map
\[
 \Phi_x:\{\nu:x\in\Delta_\nu\}\longrightarrow q_H^{-1}(x),
 \qquad
 \Phi_x(\nu)=z_\nu(x),
\]
is a bijection. There are Borel sets $S_{\nu,h}\subseteq\Delta_\nu$, indexed by $\nu=(g,i)$ with $i>0$ and $h\in H$, such that
\begin{enumerate}
 \item for each fixed $h$, the sets $(S_{\nu,h})_\nu$ form a Borel partition of $X$;
 \item for each fixed $\nu$, the sets $(S_{\nu,h})_h$ form a Borel partition of $\Delta_\nu$;
 \item the map
 \[
 J:\bigoplus_\nu^pP_{\Delta_\nu}^{(0)}E_0\longrightarrow\ell^p(H,E_0),
 \qquad
 (J\xi)(h)=\sum_\nu P_{S_{\nu,h}}^{(0)}\xi_\nu,
 \]
 is an invertible isometry, with
 \[
 (J^{-1}\eta)_\nu=\sum_hP_{S_{\nu,h}}^{(0)}\eta(h).
 \]
\end{enumerate}
\end{lem}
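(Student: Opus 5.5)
The plan is to make the bijection $\Phi_x$ explicit by unwinding the definitions in the linking groupoid. Then I will define the sets $S_{\nu,h}$ by comparing $z_\nu(x)$ with a fixed Borel cross-section of $q_H$. Once properties (1) and (2) hold, statement (3) is a bookkeeping consequence of \eqref{eq:band-pythagoras}.

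\emph{Bijection.} For $i>0$, the bisection $B_i$ lies in $Z^{\op}$, since its source is in $X$ and its range is in $Y$. For $u\in T_i$, the element $z_i(u)\in Z$ is therefore characterised by three conditions: $q_H(z_i(u))=u$, $q_G(z_i(u))=\theta_i(u)\in R_i$, and $z_i(u)^{\op}\in\widetilde B_i$.

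Let $\nu=(g,i)$ with $x\in\Delta_\nu$, so $gx\in T_i$. Then
\[
 q_H(z_\nu(x))=q_H\bigl(g^{-1}z_i(gx)\bigr)=g^{-1}\cdot gx=x,
\]
so $\Phi_x$ does map into $q_H^{-1}(x)$.

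For surjectivity, take $w\in q_H^{-1}(x)$. The point $q_G(w)\in Y$ lies in a unique $R_i$. Because $\widetilde B_i$ is a Borel bisection with $r(\widetilde B_i)=R_i$, there is a unique arrow $z^{\op}\in\widetilde B_i$ with $q_G(z)=q_G(w)$. Hence $z=gw$ for a unique $g$, by freeness of the $G$-action. Then $q_H(z)=gx\in T_i$ and $z=z_i(gx)$, so $x\in\Delta_{(g,i)}$ and $z_{(g,i)}(x)=w$.

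Injectivity follows from the same uniqueness chain. The index $i$ is forced by the condition $q_G(w)\in R_i$. The arrow $z^{\op}$ is then forced by the bisection property. Finally $g$ is forced by freeness.

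\emph{The sets $S_{\nu,h}$.} Choose a Borel map $c:X\to Z$ with $q_H\circ c=\mathrm{id}$. To build it, cover $Z$ by countably many $H$-slices $S^{(k)}$ from Lemma~\ref{lem:simultaneous-slices}, using second countability. Disjointify the open sets $q_H(S^{(k)})$ to get a Borel partition of $X$. On the $k$-th piece, use the homeomorphism $(q_H|_{S^{(k)}})^{-1}$.

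Then set
\[
 S_{\nu,h}=\{x\in\Delta_\nu : z_\nu(x)=c(x)h\}.
\]
Each such set is Borel, for the following reasons. The set $\Delta_\nu=g^{-1}T_i$ is Borel. The map $z_\nu$ is the restriction of the continuous map $z_\nu^{\mathrm{loc}}$. The set $S_{\nu,h}$ is the preimage of the diagonal of the second-countable Hausdorff space $Z$ under a Borel map $X\to Z\times Z$.

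Property (2) holds because, for $x\in\Delta_\nu$, the point $z_\nu(x)$ lies in the free orbit $c(x)H$, so there is exactly one $h$ with $z_\nu(x)=c(x)h$. Property (1) holds because, for fixed $h$ and any $x\in X$, the point $c(x)h$ lies in $q_H^{-1}(x)$. By the bijection $\Phi_x$ there is exactly one $\nu$ with $x\in\Delta_\nu$ and $z_\nu(x)=c(x)h$.

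\emph{The operator $J$.} All index sets involved are countable, so every sum below is a countable sum of vectors in disjoint Borel bands, and \eqref{eq:band-pythagoras} together with strong countable additivity applies.

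For fixed $h$, the vectors $P^{(0)}_{S_{\nu,h}}\xi_\nu$ lie in disjoint bands by (1). Hence the series defining $(J\xi)(h)$ converges, and
\[
 \|J\xi\|^p=\sum_h\sum_\nu\bigl\|P^{(0)}_{S_{\nu,h}}\xi_\nu\bigr\|^p=\sum_\nu\bigl\|P^{(0)}_{\Delta_\nu}\xi_\nu\bigr\|^p=\|\xi\|^p,
\]
where the middle equality uses (2).

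For the inverse, let $K$ denote the displayed formula for $J^{-1}$. By (2), $K$ is isometric by the same computation. Next, $JK\eta=\eta$ because $\sum_\nu P^{(0)}_{S_{\nu,h}}=P^{(0)}_X=I$. This uses (1), together with nondegeneracy of $\rho_{\pi_0}$, which gives $P^{(0)}_X=I$. Similarly, $KJ\xi=\xi$ because $\sum_h P^{(0)}_{S_{\nu,h}}=P^{(0)}_{\Delta_\nu}$ by (2). Hence $J$ is surjective and $J^{-1}=K$.

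The main obstacle I expect is the measurability bookkeeping rather than the combinatorics. This covers the Borel section $c$, Borelness of $S_{\nu,h}$ given that $z_\nu$ is only defined on a Borel set, and checking that $P^{(0)}_X=I$ in both the $p\neq2$ and $p=2$ cases. The bijection itself reduces to the bisection and freeness properties already recorded.
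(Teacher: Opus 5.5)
Your proposal is correct and follows essentially the same route as the paper: the bijection comes from the uniqueness of $R_i$, the injectivity of $r$ on $\widetilde B_i$ (equivalently, bijectivity of $\theta_i$) and freeness; $q_H$ gets a countable piecewise Borel section; the sets $S_{\nu,h}$ are Borel equality sets; and $J$ is handled by band Pythagoras, with you being more explicit than the paper about why $P^{(0)}_X=I$ and why the equality sets are Borel. The one difference is that you index by $z_\nu(x)=c(x)h$ where the paper uses $z_\nu(x)=\zeta(x)h^{-1}$. This is harmless for the lemma as stated, but you should switch to the paper's convention if you want the later regularity computation to give the left shift $\lambda_H$ and the twist $\tau_{h^{-1}}$ verbatim.
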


\begin{proof}
Fix $x\in X$ and $z\in q_H^{-1}(x)$, and put $y=q_G(z)\in Y$. There is a unique $i>0$ with $y\in R_i$. Since $\theta_i:T_i\to R_i$ is a bijection, there is a unique $t\in T_i$ with $\theta_i(t)=y$, equivalently $q_G(z_i(t))=y$. Thus $z_i(t)$ and $z$ lie in the same left $G$-orbit. Freeness gives a unique $g\in G$ such that
\[
 z_i(t)=gz.
\]
Taking ranges gives $t=gx$, and hence
\[
 z=g^{-1}z_i(gx)=\Phi_x(g,i).
\]
The same argument gives uniqueness, proving that $\Phi_x$ is bijective.

By Lemma~\ref{lem:simultaneous-slices}, the quotient map $q_H:Z\to X$ is a surjective local homeomorphism. Since $X$ is second countable, choose a countable open cover $(\mathcal U_n)$ admitting continuous local sections $\zeta_n:\mathcal U_n\to Z$. Define
\[
 \Xi_1=\mathcal U_1,
 \qquad
 \Xi_n=\mathcal U_n\setminus\bigcup_{m<n}\mathcal U_m\quad(n\geq2).
\]
Then $(\Xi_n)$ is a Borel partition of $X$ with $\Xi_n\subseteq\mathcal U_n$. Define $\zeta(x)=\zeta_n(x)$ for $x\in\Xi_n$. This countable piecewise definition gives a Borel section
\[
 \zeta:X\longrightarrow Z,
 \qquad
 q_H(\zeta(x))=x.
\]
For $x\in\Delta_\nu$, the points $z_\nu(x)$ and $\zeta(x)$ lie in the same fiber, which is a single free right $H$-orbit. Hence there is a unique $\kappa_\nu(x)\in H$ such that
\[
 z_\nu(x)=\zeta(x)\kappa_\nu(x)^{-1}.
\]
Thus the fiberwise reindexing is summarized by
\[
\begin{tikzcd}[column sep=huge,row sep=small]
 \{\nu:x\in\Delta_\nu\}
   \arrow[r,"\Phi_x","\cong"']
 & q_H^{-1}(x)
   \arrow[r,"z=\zeta(x)h^{-1}\leftrightarrow h","\cong"']
 & H
 \\[-0.4ex]
 \nu \arrow[r,mapsto]
 & z_\nu(x) \arrow[r,mapsto]
 & \kappa_\nu(x).
\end{tikzcd}
\]
The map $\kappa_\nu$ is Borel: for fixed $h\in H$, the set where $\kappa_\nu=h$ is the equality set of the Borel maps $x\mapsto z_\nu(x)$ and $x\mapsto\zeta(x)h^{-1}$ into the Hausdorff space $Z$. Put
\[
 S_{\nu,h}=\{x\in\Delta_\nu:\kappa_\nu(x)=h\}.
\]
Bijectivity of $\Phi_x$ gives the two partition statements.

The projections $P_{S_{\nu,h}}^{(0)}$ are pairwise disjoint in the two ways just described. Applying \eqref{eq:band-pythagoras} in each coordinate gives
\[
 \|J\xi\|_p^p
 =\sum_h\sum_\nu\|P_{S_{\nu,h}}^{(0)}\xi_\nu\|^p
 =\sum_\nu\|P_{\Delta_\nu}^{(0)}\xi_\nu\|^p.
\]
The same calculation verifies the displayed inverse formula.
\end{proof}

By \eqref{eq:regular-band},
\[
 P_{T_i}\ell^p(G,E_0)
 =\{\xi:\xi(g)\in P_{g^{-1}T_i}^{(0)}E_0\text{ for every }g\in G\}
 \cong\bigoplus_{g\in G}^pP_{g^{-1}T_i}^{(0)}E_0.
\]
Therefore the $Y$-corner of the dilated space is canonically
\begin{equation}\label{eq:Y-corner-regular}
 Q_Y\widehat{\mathcal E}
 =\bigoplus_{i>0}^pP_{T_i}\ell^p(G,E_0)
 \cong\bigoplus_{\nu=(g,i),\,i>0}^pP_{\Delta_\nu}^{(0)}E_0,
\end{equation}
where the last identification merely reindexes the countable double $\ell^p$-sum by $\nu=(g,i)$. We use $J$ to identify this space with $\ell^p(H,E_0)$. Whenever an operator $T$ on $\widehat{\mathcal E}$ preserves $Q_Y\widehat{\mathcal E}$, the notation $JTJ^{-1}$ means $J(T|_{Q_Y\widehat{\mathcal E}})J^{-1}$.

\subsection{Preservation of regularity}

For each open $O\subseteq X$, let
\[
 \widetilde\pi_{0,O}:M(B_{X,O})\longrightarrow B(P_O^{(0)}E_0)
\]
denote the local multiplier extension associated with the restriction of $\pi_0$ to $B_{X,O}$, as in Lemma~\ref{lem:local-multiplier}. We use the zero-extension convention from Subsection~3.1, so these local multiplier operators act on all of $E_0$. Their compatibility under Borel cutdowns is given by Lemma~\ref{lem:borel-localization}.

For $\nu=(g,i)$ with $i>0$ and $b\in\BY$, define a bounded continuous section over $O_\nu$ by
\[
 m_\nu(b)(x)
 =\Theta_{z_\nu^{\mathrm{loc}}(x)}\bigl(b(q_G(z_\nu^{\mathrm{loc}}(x)))\bigr).
\]
Recall from Subsection~3.2 that
\[
 m_i(b)(u)=\Theta_{z_i(u)}\bigl(b(q_G(z_i(u)))\bigr),
 \qquad u\in O_i.
\]
If $u=gx$ and $y=q_G(z_i(u))$, then
\[
 z_\nu^{\mathrm{loc}}(x)=g^{-1}z_i(u)=(g^{-1},u)z_i(u),
 \qquad
 q_G(z_\nu^{\mathrm{loc}}(x))=q_G(z_i(u))=y,
\]
the second identity following because $q_G$ is constant on left $G$-orbits. Figure~\ref{fig:local-transport} displays the corresponding composable arrows and the induced coefficient-fiber maps.

\begin{figure}[htbp]
\centering
\small
\[
\begin{tikzcd}[column sep=huge,row sep=large]
 & u=gx\in X
   \arrow[dr,"{(g^{-1},u)}"]
 & \\
 y=q_G(z_i(u))\in Y
   \arrow[ur,"{z_i(u)}"]
   \arrow[rr,"{z_\nu^{\mathrm{loc}}(x)}"']
 &&
 x\in X
\end{tikzcd}
\]
\vspace{-0.6em}
\[
\begin{tikzcd}[column sep=huge,row sep=large]
 & (\cBX)_u
   \arrow[dr,"{\Theta_{(g^{-1},u)}}"]
 & \\
 (\cBY)_y
   \arrow[ur,"{\Theta_{z_i(u)}}"]
   \arrow[rr,"{\Theta_{z_\nu^{\mathrm{loc}}(x)}}"']
 &&
 (\cBX)_x
\end{tikzcd}
\]
\caption{The same local transport in two levels: groupoid arrows (upper triangle) and induced coefficient-fiber isomorphisms (lower triangle). Both triangles express $z_\nu^{\mathrm{loc}}(x)=(g^{-1},u)z_i(u)$ and $\Theta_{\gamma\eta}=\Theta_\gamma\Theta_\eta$. Unlike the Borel reindexing, these transports use continuous local charts on $O_\nu$.}
\label{fig:local-transport}
\end{figure}

Hence
\begin{align*}
 m_\nu(b)(x)
 &=\Theta_{(g^{-1},u)z_i(u)}
   \bigl(b(q_G(z_i(u)))\bigr)\\
 &=\Theta_{(g^{-1},u)}\!\left(
   \Theta_{z_i(u)}\bigl(b(q_G(z_i(u)))\bigr)
  \right)\\
 &=\Theta_{(g^{-1},u)}\bigl(m_i(b)(u)\bigr)\\
 &=\sigma_{g^{-1}}(m_i(b))(x).
\end{align*}
The second equality uses $\Theta_{\gamma\eta}=\Theta_\gamma\Theta_\eta$, the third is the recalled definition of $m_i(b)$, and the last is the local form of the induced $G$-action on sections of $\cBX$.

Thus the action of $\widehat\pi(0\oplus b)$ on the block $P_{\Delta_\nu}^{(0)}E_0$ in \eqref{eq:Y-corner-regular} is
\[
 P_{\Delta_\nu}^{(0)}\widetilde\pi_{0,O_\nu}(m_\nu(b)).
\]

Put $S_\nu=S_{\nu,e_H}$ and define
\[
 \varrho_0(b)
 =\sum_\nu P_{S_\nu}^{(0)}\widetilde\pi_{0,O_\nu}(m_\nu(b)),
 \qquad b\in\BY.
\]
The sets $S_\nu$ partition $X$, so the projections $P_{S_\nu}^{(0)}$ are pairwise disjoint and have strong sum $I$. Hence the displayed formula defines $\varrho_0(b)$ blockwise on
\[
 E_0=\bigoplus_\nu^pP_{S_\nu}^{(0)}E_0.
\]
Each local multiplier commutes with these band projections and has norm at most $\|b\|$; therefore $\|\varrho_0(b)\|\leq\|b\|$. The local transport maps are algebra isomorphisms, so
\[
 m_\nu(b_1b_2)=m_\nu(b_1)m_\nu(b_2)
 \qquad\text{on }O_\nu.
\]
Consequently $\varrho_0$ is a contractive algebra representation. Its nondegeneracy will follow from the regular-conjugacy calculation below.

\begin{prop}[Regularity is preserved by the linking dilation]\label{prop:regularity-preserved}
Apply the construction of Section~3 with input covariant representation
\[
 (\pi,U)=(\pi_0^{\mathrm{reg}},\lambda_G),
\]
and denote the resulting homomorphism by $\widehat\Psi$. Then $\varrho_0$ is nondegenerate, and under the isometry $J$ from Lemma~\ref{lem:fiber-reindexing}, the compression of $\widehat\Psi$ to the $Y$-corner is $\operatorname{Reg}_H(\varrho_0)$. In particular,
\[
 \|\widehat\Psi(f_H)|_{Q_Y\widehat{\mathcal E}}\|
 \leq\|f_H\|_{F^p_{\red}(H,\BY,\tau)}
 \qquad(f_H\in C_c(H,\BYc)).
\]
\end{prop}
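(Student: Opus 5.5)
The plan is to compute the $Y$-corner compression of $\widehat\Psi$ on generators and compare with $\operatorname{Reg}_H(\varrho_0)$ after conjugating by $J$. By Lemma~\ref{lem:opposite-corner}, the compression is the integrated form of a covariant pair $(\pi_Y,U^Y)$ on $Q_Y\widehat{\mathcal E}$. Here $\pi_Y(b)$ is the restriction of $\widehat\pi(0\oplus b)$, and $U^Y_h$ is obtained from the operators $\widehat V_C$ for slices $C=\{h\}\times V$ of $\cH$. So it suffices to verify two identities:
\[
 J\pi_Y(b)J^{-1}=\varrho_0^{\mathrm{reg}}(b),
 \qquad
 JU^Y_hJ^{-1}=\lambda_H(h).
\]
Both sides are determined by products with elements $b\delta_{e_H}$ and $b\delta_h$, and $C_c(H,\BYc)$ is generated by these. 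The final norm bound then follows immediately from the definition of the reduced norm, once $\varrho_0$ is known to be nondegenerate, contractive, and represented on a $\sigma$-finite $L^p$-space ($E_0$).

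\textbf{Coefficient part.} On the block $P^{(0)}_{\Delta_\nu}E_0$, the operator $\widehat\pi(0\oplus b)$ acts by $P^{(0)}_{\Delta_\nu}\widetilde\pi_{0,O_\nu}(m_\nu(b))$, as computed just before the statement. Applying $J$ and evaluating at $h$ gives
\[
 \sum_\nu P^{(0)}_{S_{\nu,h}}\widetilde\pi_{0,O_\nu}(m_\nu(b))\eta_\nu ,
\]
where we use that the local multipliers commute with band projections. On the other hand, $(\varrho_0^{\mathrm{reg}}(b)\eta)(h)=\varrho_0(\tau_{h^{-1}}(b))\eta(h)$. Unwinding $\varrho_0$ gives
\[
 \sum_\mu P^{(0)}_{S_{\mu,e}}\widetilde\pi_{0,O_\mu}\bigl(m_\mu(\tau_{h^{-1}}b)\bigr).
\]
The key pointwise identity to establish is the following. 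If $x\in S_{\nu,h}$, so that $z_\nu(x)=\zeta(x)h^{-1}$, then the index $\mu$ with $x\in S_{\mu,e}$ has $z_\mu(x)=\zeta(x)=z_\nu(x)h$. Moreover,
\[
 m_\mu(\tau_{h^{-1}}b)(x)=m_\nu(b)(x),
\]
because $\Theta_{zh}$ applied to $\beta_{h^{-1}}(b(zh\cdot))$ equals $\Theta_z(b(q_G z))$, using the $H$-equivariance of $b$ in the bundle picture. Lemma~\ref{lem:borel-localization} then converts this equality of continuous sections on the Borel set $S_{\nu,h}\cap S_{\mu,e}$ into equality of the cut-down operators. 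Summing over these countably many disjoint bands, with strong convergence justified by \eqref{eq:band-pythagoras}, gives the first identity.

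\textbf{Unitary part.} For a slice $C=\{h\}\times V$ with $V\subseteq Y$ open, the blocks $W_{ij}(C)=W_{D_{ij}(C)}$ are built from $\lambda_G$ and band projections. Concretely, an arrow $\widetilde b_i^{-1}(h,y)\widetilde b_j$ in $\cG$ is $(g',t)$, where $g'$ is determined by $z_i(\theta_i^{-1}(hy))=g'z_j(t)h^{-1}$. Transporting through \eqref{eq:Y-corner-regular} shows that $\widehat V_C$ maps the $(g,j)$ block portion over $x$ to the $(g'',i)$ block at the same $x$, with $z_{(g'',i)}(x)=z_{(g,j)}(x)h^{-1}$. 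Under $\Phi_x$ this is right translation of the fiber point by $h^{-1}$, so under $J$ it shifts the $H$-coordinate from $k$ to $hk$, cut down to the band where $q_G$ lands in $V$. Taking $V$ through an exhausting family (or using $\overline{\pi_Y}$ as in Lemma~\ref{lem:opposite-corner}) identifies $U^Y_h$ with $\lambda_H(h)$.

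\textbf{Nondegeneracy and the main obstacle.} Nondegeneracy of $\varrho_0$ then follows from the first identity: $\varrho_0^{\mathrm{reg}}$ is unitarily equivalent to the nondegenerate $\pi_Y$ (Lemma~\ref{lem:pihat}), and restricting to the $e_H$-coordinate gives $\varrho_0$, up to the shift. I expect the main obstacle to be the index bookkeeping in the unitary step: checking that $g''$ is computed consistently with the convention $(\lambda_G(t)\xi)(s)=\xi(t^{-1}s)$ and with \eqref{eq:regular-band}, so that the fiber action is exactly right multiplication by $h^{-1}$ rather than its inverse. I would first verify this on a single point $x$, with all Borel sets replaced by singletons.
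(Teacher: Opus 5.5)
Your proposal is correct and follows essentially the paper's route: the coefficient identity $m_\nu(b)=m_\mu(\tau_{h^{-1}}b)$ on $S_{\nu,h}\cap S_{\mu,e_H}$ via Lemma~\ref{lem:borel-localization}, the index computation showing that $\widehat V_C$ acts by right translation by $h^{-1}$ in the fibers (hence by $\lambda_H$ after conjugating by $J$, cut down to a band), and nondegeneracy of $\varrho_0$ read off from Lemma~\ref{lem:pihat}.

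The only difference is organizational. The paper never identifies $U^Y_h$ separately. Instead it shows $J\widehat V_CJ^{-1}=\mathsf P_U\lambda_H(k)$ for a relatively compact slice $C$ and absorbs $\mathsf P_U$ into $J\widehat\pi(0\oplus b)J^{-1}$, which is exactly your parenthetical $\overline{\pi_Y}$ alternative. Note that $V$ must be relatively compact for $\widehat V_C$ to be defined.
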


\begin{proof}
We first identify the coefficient representation. If $x\in S_{\nu,h}$, then the defining relation for $S_{\nu,h}$ gives
\[
 z_\nu(x)=\zeta(x)h^{-1}.
\]
For $b\in\BY$, the quotient relation defining $\cBX$ and the definition of $\tau$ in \eqref{eq:induced-actions} give
\[
 \Theta_{\zeta(x)h^{-1}}
 \bigl(b(G\zeta(x)h^{-1})\bigr)
 =\Theta_{\zeta(x)}
 \bigl((\tau_{h^{-1}}b)(G\zeta(x))\bigr).
\]
If $\nu_0$ is the unique index with $x\in S_{\nu_0}$, then $z_{\nu_0}(x)=\zeta(x)$ by definition of $S_{\nu_0}=S_{\nu_0,e_H}$. Therefore, from the definition of $m_{\nu_0}$,
\[
 m_{\nu_0}(\tau_{h^{-1}}b)(x)
 =\Theta_{\zeta(x)}\bigl((\tau_{h^{-1}}b)(G\zeta(x))\bigr),
\]
which is exactly the right-hand side above. Hence, on $S_{\nu,h}\cap S_{\nu_0}$,
\[
 m_\nu(b)=m_{\nu_0}(\tau_{h^{-1}}b).
\]
Applying Lemma~\ref{lem:borel-localization} and summing over the partition $(S_{\nu_0})_{\nu_0}$ yields
\begin{equation}\label{eq:coefficient-regular-conjugacy}
 (J\widehat\pi(0\oplus b)J^{-1}\eta)(h)
 =\varrho_0(\tau_{h^{-1}}b)\eta(h).
\end{equation}

We next identify the group shift. Fix $k\in H$ and take $b\in\BYc$. Choose a relatively compact open set $U\subseteq Y$ containing $\supp(b)$, and consider the relatively compact open bisection
\[
 C=\{(k,y):k\cdot y\in U\}\subseteq H\ltimes Y.
\]
Under the $H$-corner version of \eqref{eq:groupoid-crossed-identification}, the section corresponding to $b\delta_k$ is
\[
 \widetilde{b\delta_k}(h,y)=0\quad(h\neq k),
 \qquad
 \widetilde{b\delta_k}(k,y)=b(k\cdot y).
\]
Its support is therefore contained in $C$.

Suppose the $(\nu,\nu')$ matrix entry of $\widehat V_C$ is nonzero at $x$, where $\nu=(g,i)$ and $\nu'=(g',j)$. The corresponding element of $D_{ij}(C)$ can then be written directly in the coordinate charts as
\[
 d=\omega_i(gx)^{-1}\gamma\,\omega_j(g'x)\in G\ltimes X,
 \qquad \gamma\in C.
\]
Let $\ell\in G$ be the group component of $d$. The regular $G$-shift sends the $g'$-coordinate to the $g$-coordinate, hence $g=\ell g'$. Right-multiplying the displayed factorization by $z_j(g'x)=\omega_j(g'x)^{-1}$ gives
\[
 \ell z_j(g'x)=z_i(gx)\gamma.
\]
Write $\gamma=(k,y)$. Since $r(\gamma)=q_G(z_i(gx))$ and our convention is $h\cdot Gz=G(zh^{-1})$, one has
\[
 y=k^{-1}\cdot q_G(z_i(gx))=G(z_i(gx)k).
\]
Thus the mixed product in the linking groupoid is
\[
 z_i(gx)\gamma=z_i(gx)k,
\]
and hence
\[
 \ell z_j(g'x)=z_i(gx)k.
\]
Multiplying on the left by $g^{-1}=g'^{-1}\ell^{-1}$ gives
\[
 z_\nu(x)=z_{\nu'}(x)k^{-1}.
\]
If $x\in S_{\nu',h'}$, then $z_{\nu'}(x)=\zeta(x)h'^{-1}$, and therefore
\[
 z_\nu(x)=\zeta(x)h'^{-1}k^{-1}=\zeta(x)(kh')^{-1}.
\]
Thus the resulting $H$-coordinate is $h=kh'$, equivalently $h'=k^{-1}h$. This identifies the action of $\widehat V_C$ on its initial band. For every Borel set $W\subseteq Y$, put
\[
 \mathsf P_W
 =J\bigl(Q_W|_{Q_Y\widehat{\mathcal E}}\bigr)J^{-1}.
\]
These projections admit a useful explicit description. Namely, for $h\in H$, set
\[
 X_{W,h}=\{x\in X:q_G(\zeta(x)h^{-1})\in W\}.
\]
The sets $X_{W,h}$ are Borel, and the definition of $J$, together with $q_G(z_\nu(x))=\theta_i(gx)$ for $\nu=(g,i)$, gives
\begin{equation}\label{eq:regular-W-projections}
 (\mathsf P_W\eta)(h)=P_{X_{W,h}}^{(0)}\eta(h).
\end{equation}
Thus $\mathsf P_W$ is the Borel-coordinate projection that tests whether the unit of $Y$ represented by $\zeta(x)h^{-1}$ lies in $W$. In particular, our convention $k\cdot Gz=G(zk^{-1})$ gives
\[
 \lambda_H(k)\mathsf P_{k^{-1}\!\cdot U}
 =\mathsf P_U\lambda_H(k).
\]
We verify the converse to the preceding index calculation. Fix $x\in X$ and $h'\in H$, and let $\nu'=(g',j)$ be the unique index with $x\in S_{\nu',h'}$. By Lemma~\ref{lem:fiber-reindexing}, there is also a unique $\nu=(g,i)$ such that
\[
 z_\nu(x)=z_{\nu'}(x)k^{-1}.
\]
Put $y'=q_G(z_{\nu'}(x))$ and $y=q_G(z_\nu(x))$. Then $y=k\cdot y'$. If $y\in U$, the arrow $\gamma=(k,y')$ lies in $C$, and
\[
 d=\omega_i(gx)^{-1}\gamma\,\omega_j(g'x)\in D_{ij}(C).
\]
Indeed, the source and range of $\gamma$ match the ranges of $\omega_j(g'x)$ and $\omega_i(gx)$, respectively, and $gx\in T_i$, $g'x\in T_j$. Furthermore,
\[
 z_i(gx)k=(gg'^{-1})z_j(g'x),
\]
so the $G$-component of $d$ is $gg'^{-1}$. The regular $G$-shift therefore carries the $g'$-coordinate to the $g$-coordinate with no coefficient operator acting on the underlying $E_0$-vector. Conversely, if $y\notin U$, the defining range condition of $C$ excludes that entry. Thus every source band permitted by $k^{-1}\!\cdot U$ contributes exactly once, at the resulting $H$-coordinate $h=kh'$.

Since $r(C)=U$ and $s(C)=k^{-1}\!\cdot U$, the initial and final projections of $\widehat V_C$ are $Q_{k^{-1}\!\cdot U}$ and $Q_U$, respectively. Formula \eqref{eq:regular-W-projections} identifies these support projections after conjugation by $J$. Combining both directions of the index calculation, the resulting band-supported shift is exactly the left shift restricted to its indicated initial band, giving
\[
 J\widehat V_CJ^{-1}
 =\mathsf P_U\lambda_H(k)
 =\lambda_H(k)\mathsf P_{k^{-1}\!\cdot U}.
\]
Let
\[
 D_b=J\widehat\pi(0\oplus b)J^{-1}.
\]
Because $\supp(b)\subseteq U$, the block construction gives $\widehat\pi(0\oplus b)=\widehat\pi(0\oplus b)Q_U$. After conjugation by $J$, this becomes $D_b=D_b\mathsf P_U$. Hence
\[
 J\widehat\Psi(b\delta_k)|_{Q_Y\widehat{\mathcal E}}J^{-1}
 =D_b\mathsf P_U\lambda_H(k)
 =D_b\lambda_H(k).
\]
Together with \eqref{eq:coefficient-regular-conjugacy} and the transformation groupoid identification \eqref{eq:groupoid-crossed-identification}, this gives
\begin{equation}\label{eq:regular-conjugacy}
 J\widehat\Psi(b\delta_k)|_{Q_Y\widehat{\mathcal E}}J^{-1}
 =\operatorname{Reg}_H(\varrho_0)(b\delta_k).
\end{equation}
By linearity, the same identity holds on $C_c(H,\BYc)$.

It remains to show that $\varrho_0$ is nondegenerate. Lemma~\ref{lem:pihat} says that the $\BY$-representation $\widehat\pi|_{\BY}$ on the $Y$-corner is nondegenerate. By \eqref{eq:coefficient-regular-conjugacy}, the closure of the span of its range applied to $\ell^p(H,E_0)$ is
\[
 \ell^p\bigl(H,\overline{\varrho_0(\BY)E_0}\bigr),
\]
because every $\tau_h$ is onto and finitely supported vectors are dense for $1\leq p<\infty$. This closure is all of $\ell^p(H,E_0)$, so $\overline{\varrho_0(\BY)E_0}=E_0$. Hence $\varrho_0$ is nondegenerate, and the stated reduced-norm estimate follows from \eqref{eq:regular-conjugacy} and the definition of the reduced norm.
\end{proof}

The same proposition holds with $G,\BX,X$ and $H,\BY,Y$ interchanged.

\subsection{The reduced linking completion}

Let $\mathcal R_X^{\red}$ be the family of linking dilations obtained from all regular covariant representations of $(G,\BX,\sigma)$, and let $\mathcal R_Y^{\red}$ be the symmetric family. Define
\[
 \|f\|_{\cL,\red,p}
 =\max\left\{
  \sup_{\Psi\in\mathcal R_X^{\red}}\|\Psi(f)\|,
  \sup_{\Phi\in\mathcal R_Y^{\red}}\|\Phi(f)\|
 \right\}.
\]
It is a finite submultiplicative seminorm and is bounded above by the full auxiliary linking seminorm $\|\cdot\|_{\cL,p}$ from \eqref{eq:link-norm}. Let $\mathfrak L^p_{\red}(\cL,\cB)$ be the Hausdorff completion after quotienting by the null space.

\begin{prop}[Reduced diagonal norm equality]\label{prop:reduced-diagonal-norm}
For $f_G\in C_c(G,\BXc)$ and $f_H\in C_c(H,\BYc)$,
\[
 \|f_G\|_{\cL,\red,p}=\|f_G\|_{F^p_{\red}(G,\BX,\sigma)},
 \qquad
 \|f_H\|_{\cL,\red,p}=\|f_H\|_{F^p_{\red}(H,\BY,\tau)}.
\]
Consequently the closures of the diagonal corners in $\mathfrak L^p_{\red}(\cL,\cB)$ are isometrically the two reduced crossed products.
\end{prop}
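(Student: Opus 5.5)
We prove the $G$-corner equality; the $H$-corner follows by the symmetric argument, with the interchanged form of Proposition~\ref{prop:regularity-preserved}. The argument mirrors Proposition~\ref{prop:diagonal-norm}. For the lower bound we use exact corner recovery. For the upper bound, the role of Lemma~\ref{lem:opposite-corner} is played by Proposition~\ref{prop:regularity-preserved} and its symmetric version.

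\emph{Lower bound.} Let $\pi_0$ be a nondegenerate contractive representation of $\BX$ on a $\sigma$-finite $L^p$-space. Its linking dilation $\widehat\Psi$ lies in $\mathcal R_X^{\red}$. Exact corner recovery, applied with $(\pi,U)=(\pi_0^{\mathrm{reg}},\lambda_G)$, gives
\[
 \widehat\Psi(f_G)=\operatorname{Reg}_G(\pi_0)(f_G)\oplus0 .
\]
Hence $\|f_G\|_{\cL,\red,p}\geq\|\operatorname{Reg}_G(\pi_0)(f_G)\|$. Taking the supremum over $\pi_0$ gives $\|f_G\|_{\cL,\red,p}\geq\|f_G\|_{F^p_{\red}(G,\BX,\sigma)}$.

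\emph{Upper bound.} Here we must bound $\|\Psi(f_G)\|$ for every $\Psi$ in either family.

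First take $\Psi\in\mathcal R_X^{\red}$, the dilation of $\operatorname{Reg}_G(\pi_0)$. The element $f_G$ is supported in $\cG$, so the bisection operators $\widehat V_C$ with $C\subseteq\cG$ have only the $(0,0)$ entry. Hence $\Psi(f_G)=\operatorname{Reg}_G(\pi_0)(f_G)\oplus0$, and its norm is at most the reduced norm.

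Now take $\Phi\in\mathcal R_Y^{\red}$, the dilation of some $\operatorname{Reg}_H(\varpi_0)$ with $\varpi_0$ a representation of $\BY$. Here $X$ is the opposite corner. Since $f_G$ is supported in $\cG$, which is a diagonal corner, we have $\Phi(f_G)=Q_X\Phi(f_G)Q_X$. Thus $\|\Phi(f_G)\|$ equals the norm of the compression to $Q_X\widehat{\mathcal E}$. By Proposition~\ref{prop:regularity-preserved} with $G,\BX,X$ and $H,\BY,Y$ interchanged, this compression is conjugate, via an isometry $J$, to $\operatorname{Reg}_G(\varrho_0)$ for a nondegenerate contractive $\varrho_0$ on a $\sigma$-finite $L^p$-space. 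Its norm is therefore at most $\|f_G\|_{F^p_{\red}(G,\BX,\sigma)}$.

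The block-diagonality $\Phi(f_G)=Q_X\Phi(f_G)Q_X$ must be checked. It follows from two facts: $\widehat\pi(a)$ commutes with $Q_X$ for coefficients supported over $X$, and $\widehat V_C=Q_{r(C)}\widehat V_CQ_{s(C)}$ with $r(C),s(C)\subseteq X$. Both come from the coefficient spatial matrix dilation lemma.

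\emph{Completions.} Both norms agree on $C_c(G,\BXc)$, and this algebra is dense in $F^p_{\red}(G,\BX,\sigma)$ by the density remark after Lemma~\ref{lem:induced-basic}. The identity map therefore extends to an isometric isomorphism from $F^p_{\red}(G,\BX,\sigma)$ onto the closure of the $G$-corner in $\mathfrak L^p_{\red}(\cL,\cB)$. The same holds for $H$.

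The main obstacle is the opposite-corner upper bound, and Proposition~\ref{prop:regularity-preserved} is exactly what it requires. The remaining care is bookkeeping. The representation $\varrho_0$ produced there must be a nondegenerate contractive representation on a $\sigma$-finite $L^p$-space, as the definition of the reduced norm requires. This holds because $\ell^p(H,E_0)$ and its bands are $\sigma$-finite: $H$ is countable and $E_0$ is $\sigma$-finite. The case $p=2$ needs the Hilbert realization remark from Subsection~3.1.
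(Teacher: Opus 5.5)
Your proposal is correct and follows the paper's own argument: exact corner recovery applied to $(\pi_0^{\mathrm{reg}},\lambda_G)$ gives the lower bound. For the upper bound, members of $\mathcal R_X^{\red}$ restrict to the regular representations they dilate, and the interchanged form of Proposition~\ref{prop:regularity-preserved} shows that members of $\mathcal R_Y^{\red}$ compress on the $G$-corner to some $\operatorname{Reg}_G(\varrho_0)$. Your block-diagonality check and density argument for the completions only spell out what the paper leaves implicit. One small correction: $\varrho_0$ acts on the original $\sigma$-finite space $E_0$ of the input representation, so its admissibility needs no band or $p=2$ realization argument.
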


\begin{proof}
Exact corner recovery applied to regular covariant representations gives the inequality ``$\geq$'' on the $G$-corner. For the reverse inequality, members of $\mathcal R_X^{\red}$ restrict there to the regular representations from which they were dilated, while Proposition~\ref{prop:regularity-preserved}, applied symmetrically, shows that members of $\mathcal R_Y^{\red}$ also restrict to regular covariant representations. Every term in the supremum defining $\|f_G\|_{\cL,\red,p}$ is therefore bounded by the intrinsic reduced norm. The $H$-corner is symmetric.
\end{proof}

Let
\begin{align*}
 X_{p,\red}
 &=\overline{\Gamma_c(Z^{\op},r^*\cB)}^{\,\|\cdot\|_{\cL,\red,p}},\\
 Y_{p,\red}
 &=\overline{\Gamma_c(Z,r^*\cB)}^{\,\|\cdot\|_{\cL,\red,p}}.
\end{align*}
As in the full case, convolution makes $(X_{p,\red},Y_{p,\red})$ a Banach correspondence from $F^p_{\red}(G,\BX,\sigma)$ to $F^p_{\red}(H,\BY,\tau)$, and the reversed pair is the opposite correspondence. Submultiplicativity gives the norm estimates and associativity gives the correspondence and Morita compatibility identities.

\begin{proof}[Proof of the reduced case in Theorem A]
The pairing-valued approximate identities
from Lemma~\ref{lem:pairing-cai} apply
unchanged. Since the reduced
crossed product norms are bounded by
$\|\cdot\|_1$, the fullness argument in
Proposition~\ref{prop:full-nondegenerate}
also establishes fullness of both
reduced pairings.

Moreover,
\[
 \|\cdot\|_{\cL,\red,p}
 \leq\|\cdot\|_{\cL,p}.
\]
Hence the approximate-identity convergence
on the off-diagonal modules established in
Proposition~\ref{prop:full-nondegenerate}
continues to hold in the reduced linking
norm. All four reduced module actions
are therefore nondegenerate.

Together with
Proposition~\ref{prop:reduced-diagonal-norm}
and the Morita compatibility identities,
this proves the reduced assertion.
\end{proof}

\begin{rem}
When $A=\mathbb C$ and both coefficient actions are trivial, Theorem~A reduces to the full and reduced $L^p$ symmetric imprimitivity theorem in \cite[Theorem~8.1]{CDW}. When $p=2$ and $A$ is a $C^*$-algebra, the theorem gives Banach algebra Morita equivalence for the full crossed products appearing in Raeburn's symmetric imprimitivity theorem \cite{Raeburn88}; our assertion is not formulated in terms of Hilbert $C^*$-imprimitivity bimodules. For reduced crossed products, the corresponding $C^*$-theoretic strong Morita equivalence was
established by Quigg--Spielberg \cite{QuiggSpielberg}; a direct regular representation proof was subsequently given by an Huef--Raeburn \cite{anHuefRaeburn}, while a linking bundle formulation appears in \cite[Section~6]{SimsWilliamsFell}.
\end{rem}

 \section*{Funding}
This work was supported by the Research Start-up Fund of Jilin University (grant number 419080524100).
 \section*{Acknowledgements}
The author used OpenAI's ChatGPT
6-Sol to assist
with mathematical exploration, the
examination of arguments, and manuscript
organization and revision. The author
directed this work, reviewed the resulting
suggestions, and assumes responsibility
for the accuracy and originality of
the manuscript.

\end{document}